\let\@fnsymbol\@arabic
\theoremstyle{plain}
\newtheorem{theorem}{Theorem}[section]
\newtheorem{corollary}[theorem]{Corollary}
\newtheorem{lemma}[theorem]{Lemma}
\newtheorem{proposition}[theorem]{Proposition}
\theoremstyle{definition}
\newtheorem{remark}[theorem]{Remark}
\numberwithin{equation}{section}
\newcommand{\disc}{\mathbf{d}}
\renewcommand{\Re}{\operatorname{Re}} 
\newcommand{\range}{\operatorname{Range}} 
\newcommand{\id}{\operatorname{id}}
\newcommand{\n}{\mathbb{N}}
\newcommand{\<}{\left\langle}  
\renewcommand {\>}{\right\rangle}  
\newcommand{\norma}[1]{\left\|#1\right\|}
\newcommand{\normal}[1]{{\left\vert\kern-0.25ex\left\vert\kern-0.25ex\left\vert #1 
    \right\vert\kern-0.25ex\right\vert\kern-0.25ex\right\vert}}
\newcommand{\pr}{\mathbb{P}}
\newcommand{\B}{\mathcal{B}(X)}
\newcommand{\ew}{\mathbb{E}}
\newcommand{\bpi}{\boldsymbol{\pi}}
\newcommand{\ww}{\mathcal{W}}
\newcommand{\hh}{\mathcal{H}}
\newcommand{\yy}{\overline{Y}}
\newcommand{\pp}{\mathcal{P}}
\newcommand{\dd}{\mathcal{D}}
\newcommand{\bj}[1]{\mathbf{j}_{#1}}
\newcommand{\bs}[1]{\mathbf{s}_{#1}}
\newcommand{\bt}[1]{\boldsymbol{\theta}_{#1}}
\newcommand{\bh}[1]{\mathbf{h}_{#1}}
\newcommand{\bds}[1]{\boldsymbol{\Delta \tau}_{#1}}
\newcommand{\tP}{\widetilde{P}}
\newcommand{\supp}{\operatorname{supp}}
\newcommand{\wlim}{w^*\mbox{-}\lim}
\newcommand{\wcl}{w^*\mbox{-}\operatorname{cl}}
\newcommand{\sbullet}{\raise .5ex\hbox{\tiny$\bullet$}}
\title{\bf Ergodic properties of some piecewise-deterministic Markov
process with application to gene expression modelling }
\author{Dawid Czapla\thanks{E-mail: dawid.czapla@us.edu.pl; tel. +48518582645; Corresponding author.}\;\;
		Katarzyna Horbacz\thanks{E-mail: horbacz@math.us.edu.pl}\;\;
        Hanna Wojew\'odka\thanks{E-mail: hanna.wojewodka@us.edu.pl}}
\affil{\small{ Institute of Mathematics, University of Silesia in Katowice,\\ Bankowa 14, 40-007 Katowice, Poland}}
\date{}
\begin{document}
\maketitle
\vspace*{-0.2cm}
\begin{abstract}
A piecewise-deterministic Markov process, specified by random jumps and switching semiflows, as well as the associated Markov chain given by its post-jump locations, are investigated in this paper. The existence of an exponentially attracting invariant measure and the strong law of large numbers are proven for the chain. Further, a one-to-one correspondence between invariant measures for the chain and invariant measures for the continuous-time process is established. This result, together with the aforementioned ergodic properties of the discrete-time model, is used to derive the strong law of large numbers for the process. The studied random dynamical systems are inspired by certain biological models of gene expression, which are also discussed within this paper.
\end{abstract}
\maketitle
\section*{Introduction}
In this paper we study a subclass of piecewise-deterministic Markov processes (PDMPs), which involve deterministic motion punctuated by random jumps (occuring according to a Poisson process). Due to its wide applications in natural sciences, especially in molecular biology (e.g. models for gene expression \cite{b:HHS, b:mackey_tyran}), the PDMPs have already been widely studied. The research is mainly focused on their long time behaviour and ergodic properties (see e.g. \cite{b:benaim3, b:benaim1, b:costa}).

We are concerned with the PDMP arising from a dynamical system governed by a~specific jump mechanism. Roughly speaking, the deterministic component of the system evolves according to a finite collection of semiflows, which are randomly switched with time. The randomness of post-jump locations stems, however, not only from the semiflows switching  (like in \cite{b:benaim3, b:benaim1}), but also from jumps which occur directly before choosing a new semiflow. Each of these jumps is determined by a randomly selected transfomation of the current state of the system, additionally perturbed by a random shift within an \hbox{$\varepsilon$-neighbourhood}. Such a dynamical system generalises, among others, those developed \hbox{in \cite{b:horbacz_diss}}. It should be stressed that we consider the case where the process evolves over a general phase space, which is not necessarily compact or locally compact (as it is required e.g. in \cite{b:benaim1, b:benaim3, b:costa}). Under these settings, the ergodic properties of the process usually cannot be captured by conventional methods, developed e.g. in \cite{b:meyn}.

The evolution of our dynamical system, which is further denoted by $(\overline{Y}(t))_{t\geq 0}$, can be described in more detail as follows.
The initial state of the system and the index of the semiflow which transforms it are described by arbitrarily distributed random variables $Y_0$ and $\xi_0$, respectively. 
The process is driven by the first flow, i.e.  $Y(t)=S_{\xi_0}(t,Y_0)$, until some random moment $\tau_1$, at which it jumps to a random point in the \hbox{$\varepsilon$-neighbourhood} of $w_{\theta_1}\left(S_{\xi_0}(\Delta \tau_1,Y_0)\right)$, where $(y,\theta)\mapsto w_{\theta}(y)$ is a given continous map, and $\theta_1$ is a random variable depending on $\overline{Y}(\tau_1-)$.
Let $Y_1:=\overline{Y}(\tau_1)=w_{\theta_1}\left(S_{\xi_0}(\Delta \tau_1,Y_0)\right)+H_1$ denote the position of the process directly after this jump. The index of the semiflow that $\overline{Y}(t)$ follows until the next random moment $\tau_2$ is given by $\xi_1$, which depends on both the current state $Y_1$ and the index $\xi_0$ of the previous flow. At the \hbox{time $\tau_2$} the procedure restarts for $(Y_1,\xi_1)$ and is continued inductively. As a result, we obtain a piecewise-deterministic trajectory $\left(\overline{Y}(t)\right)_{t\geq 0}$ with jump times $\tau_1, \tau_2,\ldots$ and post-jump locations $Y_1, Y_2,\ldots$, as illustared in Fig.~\ref{fig}.

\begin{figure}[h!]
\centering
\includegraphics[ scale=0.45]{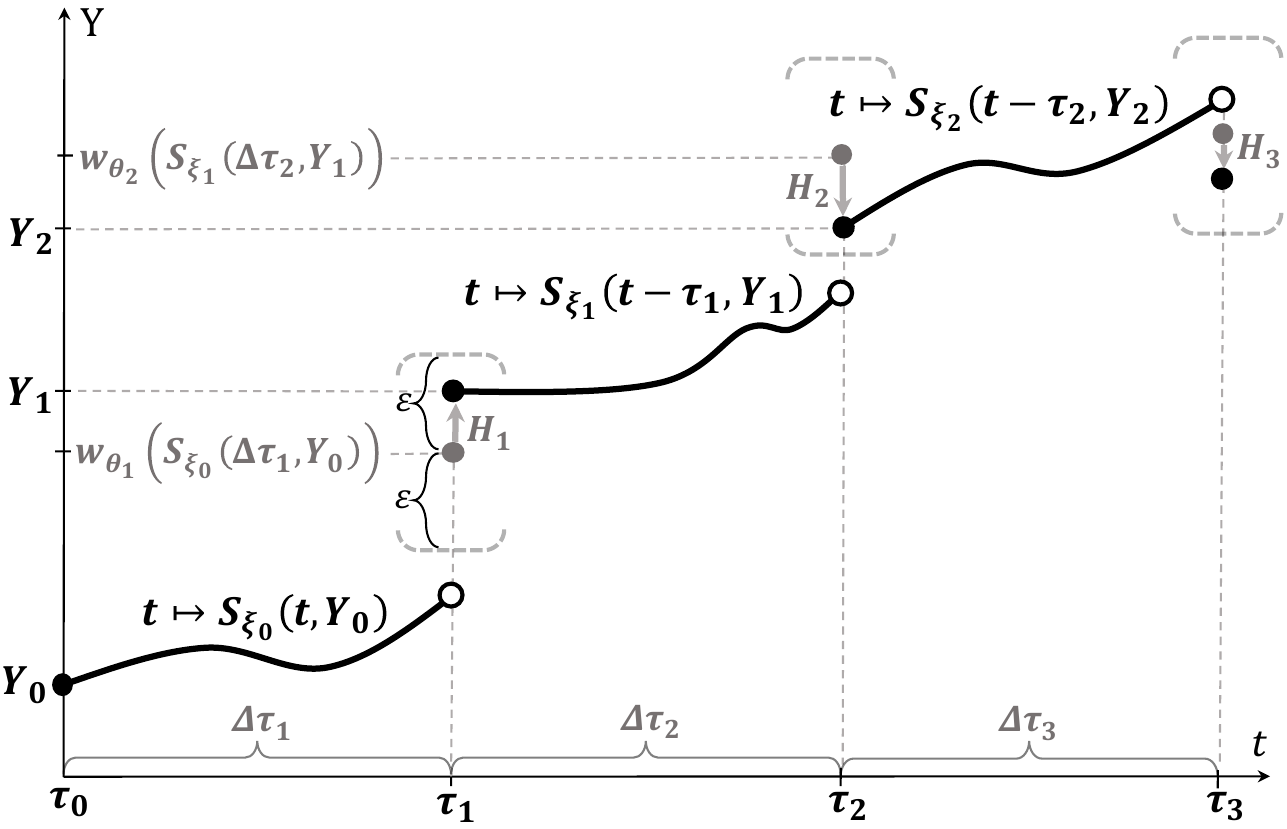}
\caption{
An example trajectory of the process $\left(\overline{Y}(t)\right)_{t\geq 0}$.} 
\label{fig}
\end{figure}

If the collection of semiflows consists of more than one element, then $\left(Y_n\right)_{n\in\n_0}$ and $\left(\overline{Y}(t)\right)_{t\geq 0}$ may not have the Markov property. Therefore, in order to provide the possibility for analysis through the tools of Markov semigroups theory, we investigate the Markov chain $\left(Y_n,\xi_n\right)_{n\in\n_0}$ and the Markov processes  $\left(\overline{Y}(t),\overline{\xi}(t)\right)_{t\geq 0}$, where $\overline{\xi}(t)=\xi_n$ for $t\in\left[\tau_n,\tau_{n+1}\right)$. Clearly, $\left(\overline{Y}(\tau_n),\overline{\xi}(\tau_n)\right)=(Y_n,\xi_n)$ for every $n\in\n_0$.

The main goal of the paper is to provide a set of relatively easily verifiable conditions, which are sufficient to guarantee a certain form of exponential ergodicity of the chain $(Y_n,\xi_n)_{n\in\n_0}$, describing the post-jump locations, as well as the strong law of large numbers (SLLN) for both $(Y_n,\xi_n)_{n\in\n_0}$ and $\left(\overline{Y}(t),\overline{\xi}(t)\right)_{t\geq 0}$. As will be clarified in Section \ref{sec:new}, the conditions imposed on the semiflows, governing the deterministic evolution of the system, are quite naturally met by a wide class of semiflows generated by differential equations (in a reflexive Banach space) involving dissipative operators. An important contribution of our study is also establishing  a one-to-one correspondence between the sets of invariant measures for the process $\left(\overline{Y}(t),\overline{\xi}(t)\right)_{t\geq 0}$ and the chain $(Y_n,\xi_n)_{n\in\n_0}$. 

By the above-mentioned exponential ergodicity of $(Y_n,\xi_n)_{n\in\n_0}$ we mean the existence of a unique invariant distribution, which is exponentially attracting in the dual-bounded Lipschitz distance, also known as the Fortet--Mourier or Dudley metric (see \cite{b:las_frac,b:dudley}). To obtain this, we apply the results of R. Kapica and M. \'Sl\k{e}czka \cite{b:kapica}, which in turn are based on the asymptotic coupling method introduced by M. Hairer~\cite{b:hairer} (applied e.g. in \cite{ b:CLT, b:sleczka, b:woj}). The SLLN for the chain $(Y_n,\xi_n)_{n\in\n_0}$ is shown with the help of the theorem of A. Shirikyan \cite{b:shir}. Having established such properties for the discrete-time model, we further prove that they imply the existence of a unique invariant distribution and the SLLN for the corresponding continuous-time process. Our proofs require the use of several results from the theory of semigroups of linear operators in Banach spaces (see e.g. \cite{b:dynkinb, b:dynkin_mar}), as well as a martingale method (cf. \cite{b:benaim1}). It still remains, however, an open question whether the exponential ergodicity (in the sense described above) of the discrete-time model can imply the analogous property for the associated PDMP.

From the point of view of application, the examined dynamical system provides a useful tool for modelling certain biological processes. For instance, as shown in Section \ref{section:example1}, the process $(Y(t))_{t\geq 0}$ may be adapted as a continuous-time model of prokaryotic gene expression in the presence of transcriptional bursting (cf. \cite{b:mackey_tyran}). It is worth stressing here that, in our framework, the existence of a unique invariant distribution is guaranteed by the aforementioned restrictions imposed on the model. In contrast, applying the results of \cite{b:mackey_tyran}, the invariant measure can only be obtained by solving explicitly some differential equation and proving that its solution is a strictly positive probability density function. The second example, discussed in Section \ref{section:example2}, refers to the discrete-time model for an autoregulated gene, introduced by S.C. Hille et al. \cite{b:HHS}, whose non-disturbed version also appears, for instance, in the cell cycle analysis (cf. \cite{b:cells}). This model constitutes a special case of the system $(Y_n)_{n\in\n_0}$ and indicates the importance of considering a non-locally compact space as the state space in the abstract framework.

The paper is organised as follows. In Section \ref{section2} we introduce basic notation and fundamental concepts on Markov operators (discussed more widely e.g. in \cite{b:las_frac, b:meyn, b:revuz}). Section~\ref{DSdef} provides a detailed description of the model and the principal assumptions employed in the studies. Section \ref{sec:new} is intended to point out a general class of differential equations which generate semiflows consistent with our framework. All the main results are formulated in Section \ref{sec: main_results}, which is divided into two parts: \hbox{Section \ref{sec:main1}}, devoted to the discrete-time model, and Section \ref{sec:cont}, pertaining to its continuous-time interpolation. In Section \ref{section:example} we provide two examples of applications of our abstract framework in the gene expression analysis. Finally, the detailed proofs of all the main results are carried out in Section~\ref{sec:proofs}. Additionally, in the \textnormal{\hyperref[appendix]{Appendix}}, we give a rough sketch of the proof of \hbox{\cite[Theorem 2.1]{b:kapica}}, which serves as an essential tool for the analysis contained in\hbox{ Section~\ref{sec:proof1}}.

\section{Preliminaries}\label{section2}
Let us begin with introducing a piece of notation. Given a metric space $(E,\rho)$, endowed with the Borel $\sigma$-field $\mathcal{B}(E)$, we define

\begin{itemize}
\item[] $B_b(E)$ = the space of all bounded, Borel, real valued functions defined on $E$, endowed with the supremum norm: $\norma{f}_{\infty}=\sup_{x\in E} |f(x)|$, $f\in B_b(E)$;
\item[] $C_b(E)$ = the subspace of $B_b(E)$ consisting of continuous functions;
\item[] $Lip_b(E)$ = the subspace of $B_b(E)$ consisting of Lipschitz continuous functions;
\item[] $B_E(x,r)=\{y\in E:\; \rho(x,y)<r\}$, $r>0$;
\item[] $\mathcal{M}_s(E)$ = the space of all finite, countably additive functions (signed measures) on~$\mathcal{B}(E)$;
\item[] $\mathcal{M}(E)$ = the subset of $\mathcal{M}_s(E)$ consisting of all non-negative measures;
\item[] $\mathcal{M}_1(E)$ = the subset of $\mathcal{M}(E)$ consisting of all probability measures;
\item[] $\mathcal{M}_1^1(E)$ = the set of all $\mu\in\mathcal{M}_1(E)$ satisfying $\int_E \rho(x,x^*)\,\mu(dx)<\infty$ where $x^*$ is an arbitrary (and fixed) point of $E$.
\end{itemize}
Moreover, we use the symbol $\mathbbm{1}_A$ to denote the indicator of $A\subset E$, and define $\mathbb{R}_+:=\left[0,\infty\right)$. 

To simplify notation, in what follows, we write $\<f,\mu\>$ for the integral $\int_E f\,d\mu$, whenever $f:E\to\mathbb{R}$ is a bounded below, Borel measurable function $f:E\to\mathbb{R}$ and $\mu\in\mathcal{M}_s(E)$.

The space $\mathcal{M}(E)$ is assumed to be endowed with the Fortet-Mourier distance \cite{b:las_frac}, defined by:
$$d_{FM}(\mu_1,\mu_2)=\sup\{|\<f,\mu_1-\mu_2\>|:\;f\in \mathcal{R}_{FM}(E)\},\;\;\;\mu_1,\mu_2\in\mathcal{M}(E),$$
where 
\vspace{-0.1cm}
$$
\mathcal{R}_{FM}(E)=\{f\in B_b(E):\; |f|\leq 1,\; |f(x)-f(y)|\leq \rho(x,y)\;\;\mbox{for}\;\;x,y\in E\}.$$
It is well-known (cf. e.g. \cite{b:dudley}) that, whenever $E$ is a Polish space, i.e. a complete separable metric space, then the weak convergence of measures in $\mathcal{M}(E)$ is equivalent to their convergence in the Fortet-Mourier distance \cite{b:las_frac}. We remind here that a sequence $\mu_n\in\mathcal{M}(E)$, $n\in\n$, is \emph{weakly convergent} to $\mu\in\mathcal{M}(E)$ (which is denoted by $\mu_n\stackrel{w}{\to}{\mu}$) whenever $\<f,\mu_n\>\to\<f,\mu\>$ for all $f\in C_b(E)$.

Let us now recall several basic definitions and concepts in the theory of Markov operators, which will be used throughout the paper.

A function $P:E\times\mathcal{B}(E)\rightarrow \left[0,1\right]$ is called a \emph{(sub)stochastic kernel} if for each $A\in\mathcal{B}(E)$, $x\mapsto P(x,A)$ is a measurable map on $E$, and for each $x\in E$, $A\mapsto P(x,A)$ is a~(sub)probability Borel measure on $\mathcal{B}(E)$. 
For an arbitrary (sub)\,stochastic kernel $P$ we consider two operators:
\begin{equation} \label{regp} \mu P(A)=\int_{E} P(x,A)\,\mu(dx)\;\;\;\mbox{for}\;\;\; \mu\in\mathcal{M}(E),\;A\in \mathcal{B}(E),\end{equation}
and
\begin{equation}\label{regd} Pf(x)=\int_{E} f(y)\,P(x,dy)\;\;\;\mbox{for}\;\;\; x\in E,\; f\in B_b(E). \end{equation}
If the kernel $P$ is stochastic, then $(\cdot) P:\mathcal{M}(E) \to \mathcal{M}(E)$ given by \eqref{regp} is called a \emph{regular Markov operator}, and $P(\cdot):B_b(E)\to B_b(E)$ defined by \eqref{regd} is said to be its \emph{dual operator} (see \cite{b:las_frac}). It is easy to check that
\begin{equation} \label{duality} \<f,\mu P\>=\<Pf,\mu\>\;\;\;\mbox{for}\;\;\; f\in B(E),\;\mu\in\mathcal{M}_1(E).\end{equation}
Let us note that $P(\cdot)$, given by \eqref{regd}, can be extended in the usual way to the space of all bounded below Borel functions $\overline{B}_b(E)$ in such a way that \eqref{duality} holds for all $f\in \overline{B}_b(E)$. For notational simplicity, we shall use the same symbol for the extension as for the original operator on $B_b(E)$.

A regular Markov operator $P$ is said to be \emph{Feller} if $Pf\in C_b(E)$ for every $f\in C_b(E)$. A measure $\mu^*\in\mathcal{M}(E)$ is called \emph{invariant} for a Markov operator $P$ if \hbox{$\mu^{*} P = \mu^{*}$}. 
Moreover, we shall say that a probability measure $\mu^*\in\mathcal{M}_1(E)$ is \emph{attracting} whenever
$d_{FM}(\mu P^n, \mu^{*})\to 0$ for any $\mu\in\mathcal{M}_1^1(E)$. If the rate of this convergence is exponential then $\mu^*$ is said to be \emph{exponentially attracting} .

Suppose we are given a time-homogeneous Markov chain \hbox{$(\Phi_n)_{n\in\n_0}$} with state space $E$, defined on a probability space $(\Omega, \mathcal{A},\pr)$. The transition law of this chain is defined by
\begin{equation} \label{trans_n} P(x,A)=\pr(\Phi_{n+1}\in A|\Phi_n=x)\;\;\;\mbox{for}\;\;\;x\in E,\;A\in\mathcal{B}(E),\;n\in\n_0.\end{equation}
Let $(\cdot)P$ denote the Markov operator corresponding to the kernel \eqref{trans_n}. Assuming that $\mu_n$ stands for the distribution of $\Phi_n$, we see that $\mu_{n+1}=\mu_n P$ for all $n\in\n_0$.

A \emph{regular Markov semigroup} $(P^t)_{t\geq 0}$ is a family of regular Markov operators \linebreak \hbox{$P^t: \mathcal{M}(E) \to \mathcal{M}(E)$}, $t\geq 0$, which form a semigroup (under composition) with the identity transformation $P^0$ as the unity element. The semigroup $(P^t)_{t\geq 0}$ is called \emph{Feller} whenever each $P^t$, $t\geq 0$, is Feller. A measure $\mu^*\in\mathcal{M}(E)$ is said to be \emph{invariant} for the Markov semigroup $(P^t)_{t\geq 0}$ if $\mu^{*} P^t = \mu^{*}$ for all $t\geq 0$.

Let {$(\Psi_t)_{t\geq 0}$} be an $E$-valued time-homogeneous Markov process with continuous time parameter $t\in\mathbb{R}_+$. The transition law of {$(\Psi_t)_{t\geq 0}$} is defined by the collection of stochastic kernels of the form
\begin{equation} \label{trans_t} P^t(x,A)=\pr(\psi_{s+t}\in A| \psi_s=x),\;\;\;\mbox{for}\;\;\;x\in E,\;A\in\mathcal{B}(E),\;s,t\geq 0.\end{equation}
Due to the Chapman-Kolmogorov equation, the family $(P^t)_{t\geq 0}$ of Markov operators corresponding to the kernels given by \eqref{trans_t} is then a regular Markov semigroup. 

We will write $\pr_x$ to denote the probability measure $\pr(\cdot\,|\,\Psi_0=x)$ and $\ew_x$ for the expectation with respect to $\pr_x$.

In our further considerations, we also use the concept of \emph{Lyapunov function}. It is defined as a continuous map $V:E\to\left[0,\infty\right)$ which is bounded on bounded sets and satisfies (whenever $E$ is unbounded) $V(x)\to\infty$ as \hbox{$\rho(x,x_0)\to \infty$} for some~$x_0\in E$.

\section{Structure and assumptions of the model} \label{DSdef}
Let $(H,\norma{\cdot})$ be a separable \hbox{Banach} space, and let $Y$ be closed subset of $H$. Further, assume that we are given a finite set \hbox{$I:=\{1,\ldots,N\}$}, endowed with the discrete metric
\begin{equation}
\label{dm}
\disc(i,j)=\begin{cases}
1 &\mbox{for}\;\;\; i\neq j, \\
0 &\mbox{for}\;\;\; i=j,
 \end{cases}
\end{equation}
and a topological measure space $(\Theta,\mathcal{B}(\Theta),\vartheta)$ with a $\sigma$-finite Borel mesure $\vartheta$. For simplicity, in the rest of the paper, we will write $d\theta$ instead of $\vartheta(d \theta)$. 

Let us now consider a collection of semiflows $S_i:\mathbb{R}_+\times Y \to Y$, $i\in I$, where~\hbox{$\mathbb{R}_+:=\left[0,\infty\right)$}, which are continuous with respect to each variable. The \emph{semiflow property} means, as usual,~that
$$S_i(0,y)=y\;\;\;\mbox{and}\;\;\; S_i(s+t,y)=S_i(s,S_i(t,y)))\;\;\;\mbox{for}\;\;\;y\in Y,\; s,t\geq 0.$$
The maps $S_i$ will be switched according to a matrix of continuous functions (probabilities) $\pi_{ij}:Y\to\left[0,1\right]$, $i,j\in I$, satisfying $\sum_{j\in I}\pi_{ij}(y)=1$ for all $y\in Y$ and $i\in I$.
Further, assume that we are given a family $\{w_{\theta}:\,\theta \in \Theta\}$  of transformations from $Y$ to itself, which will be related to the post-jump locations of our dynamical system. We will require that the map $(y,\theta)\mapsto w_{\theta}(y)$ is continuous, and that there exists $\varepsilon^* \in (0,\infty)$ such that
$$w_{\theta}(y)+h\in Y \;\;\;\mbox{whenever}\;\;\; h\in B_H(0,\varepsilon^*),\;\theta \in \Theta,\;y\in Y.$$
Let $p: Y\times \Theta \to \left[0,\infty\right)$ be a continuous map such that $\int_{\Theta} p(y,\theta)\,d\vartheta(\theta)=1$ for any $y\in Y$.  The place-dependent probability density function $\theta\mapsto p(y,\theta)$ will capture the likelihood of occurrence of  $w_{\theta}$ at any jump time.

Now fix $\varepsilon \in \left(0,\varepsilon^* \right]$, and assume that $\nu^{\varepsilon}\in\mathcal{M}_1(H)$ is an arbitrary measure supported on $B_H(0,\varepsilon)$. On a suitable probability space, say $(\Omega, \mathcal{F}, \mathbb{P})$, we define a sequence of random variables {$(Y_n)_{n\in \n_0}$}, taking values in $(Y,\mathcal{B}(Y))$, in such a way that
\begin{equation}\label{def:Y_n}
Y_{n+1}=w_{\theta_{n+1}}(S_{\xi_{n}}(\Delta \tau_{n+1},Y_{n}))+H_{n+1}\;\;\;\mbox{for}\;\;\;n\in\n_0,
\end{equation}
where
\begin{itemize}
\item[\sbullet] $Y_0:\Omega\to Y$ and $\xi_0:\Omega\to I$ are random variables with arbitrary distributions;
\item[\sbullet] $\tau_n :\Omega\to \left[0,\infty\right)$, $n\in \n_0$, form a strictly increasing sequence of random variables with $\tau_0=0$ and $\tau_n\to\infty$, whose increments $\Delta \tau_{n+1}=\tau_{n+1}-\tau_{n}$ are mutually independent and have common exponential distribution with parameter $\lambda>0$;
\item[\sbullet] $H_n:\Omega\to H$, $n\in\n$, are identically distributed random variables with distribution $\nu^{\varepsilon}$;
\item[\sbullet] $\theta_n :\Omega\to \Theta$ and $\xi_n :\Omega\to I$, $n\in \n$, are random variables defined (inductively) so that
$$
\pr(\theta_{n+1}\in D\;|\;S_{\xi_{n}}(\Delta \tau_{n+1},Y_{n})=y)
=\int_{D} p(y,\theta)\,d\theta,
$$
\begin{equation}\label{def:xi_n}
\pr(\xi_{n+1}=j\;|\;Y_{n+1}=y,\, \xi_{n}=i)=\pi_{ij}(y)
\end{equation}
for all $D\in\mathcal{B}(\Theta)$, $y\in Y$, $i,j\in I$ and $n\in\n_0$. Simultaneously, letting
$$\mathcal{G}_0=\sigma(Y_0,\;\xi_0)\;\;\;\mbox{and}\;\;\;\mathcal{G}_n:=\sigma(\mathcal{G}_0\cup\{H_i,\tau_i,\theta_i,\xi_i:\,1\leq i\leq n\})\;\;\;\mbox{for}\;\;\;n\in\n,$$ 
we require that $\theta_{n+1}$ is conditionally independent of the $\mathcal{G}_n$ given \hbox{$S_{\xi_{n}}(\Delta \tau_{n+1},Y_{n})=y$}, and that $\xi_{n+1}$ is conditionally independent of $\mathcal{G}_n$ given $(Y_{n+1},\xi_n)=(y,i)$.

\end{itemize}
Moreover, we assume that, for any $n\in\n_0$, $\Delta\tau_{n+1}$, $H_{n+1}$, $\theta_{n+1}$ and $\xi_{n+1}$ are (\hbox{mutually}) conditionally independent given $\mathcal{G}_n$, and that $\Delta\tau_{n+1}$ and $H_{n+1}$ are independent of $\mathcal{G}_n$.

In our further analysis we shall extensively use the following assumptions:

\begin{itemize}
\item[(A1)]\phantomsection\label{cnd:A1} There exists $y^*\in Y$ such that
$$\sup_{y\in Y}\int_0^{\infty}e^{-\lambda t}\int_{\Theta}\norma{ w_{\theta}(S_i(t,y^*))-y^*}p(S_i(t,y),\theta)\,d\theta\,dt<\infty\;\;\;\mbox{for}\;\;\;i\in I;$$
\item[(A2)]\phantomsection\label{cnd:A2} 
There exist $\alpha\in(-\infty,\lambda)$, $L>0$ and a function \hbox{$\mathcal{L}:Y\to \mathbb{R}_+$}, which is bounded on every  bounded subset of $Y$, such that, for $t\geq 0$, $y_1,y_2\in Y$ and $i,j\in I$,
$$\norma{S_i(t,y_1)-S_j(t,y_2)}\leq Le^{\alpha t}\norma{y_1-y_2}+t\,\mathcal{L}(y_2)\,\disc(i,j),$$
where $\disc(i,j)$ is given by \eqref{dm};
\item[(A3)]\phantomsection\label{cnd:A3}  There exists $L_w>0$ such that
$$\int_{\Theta} \norma{w_{\theta}(y_1)-w_{\theta}(y_2)}p(y_1,\theta)\,d\theta\leq L_w\norma{y_1-y_2}\;\;\;\mbox{for}\;\;\;y_1,y_2\in Y;$$
\item[(A4)]\phantomsection\label{cnd:A4}  There exist $L_{\pi}>0$ and $L_p>0$  such that
$$\sum_{j\in I} |\pi_{ij}(y_1)-\pi_{ij}(y_2)|\leq L_{\pi} \norma{y_1-y_2}\;\;\;\mbox{for}\;\;\;y_1,y_2\in Y,\;i\in I,$$
\begin{equation}
\label{cond_p}
\int_{\Theta} |p(y_1,\theta)-p(y_2,\theta)|\,d\theta\leq L_{p} \norma{y_1-y_2}\;\;\;\mbox{for}\;\;\;y_1,y_2\in Y;
\end{equation}
\item[(A5)]\phantomsection\label{cnd:A5}  There exist $\delta_{\pi}>0$ and $\delta_{p}>0$ such that
$$\sum_{j\in I} \min\{\pi_{i_1,j}(y_1),\pi_{i_2,j}(y_2)\}\geq \delta_{\pi}\;\;\;\mbox{for}\;\;\; t\geq 0,\;i_1,i_2\in I,\; \; y_1,y_2\in Y,$$
$$\int_{\Theta(y_1,y_2)}\min\{p(y_1,\theta),p(y_2,\theta)\}\,d\theta\geq \delta_p\;\;\;\mbox{for}\;\;\;y_1,y_2\in Y,$$
where 
\begin{equation}
\label{defit}
\Theta(y_1,y_2):=\{\theta\in\Theta:\, \norma{w_{\theta}(y_1)-w_{\theta}(y_2)}\leq L_w \norma{y_1-y_2}\}.
\end{equation}
\end{itemize}

Let us now consider a time-homogeneous Markov chain of the form {$(Y_n,\xi_n)_{n\in\n_0}$}, evolving on the space $X:=Y\times I$. We assume that $X$ is equipped with the metric defined~by
\begin{equation}\label{def:metric} \rho_c((y_1,i),(y_2,j))=\norma{y_1-y_2}+c\
\disc(i,j),\;\;\;(y_1,i),(y_2,j)\in X,\end{equation}
where $\disc(i,j)$ is given by \eqref{dm}, and$c>0$ is a sufficiently large constant (specified in Section \ref{sec:proofs}), which depends on the parameters appearing in conditions  \hyperref[cnd:A1]{(A1)}-\hyperref[cnd:A3]{(A3)}. 

The transition law of the chain  $(Y_n,\xi_n)_{n\in\n_0}$, defined on $X\times\mathcal{B}(X)$, will be denoted by $P_{\varepsilon}$. An easy computation shows that, for any $(y,i)\in X$, $A\in\mathcal{B}(X)$,

\begin{align}
P_{\varepsilon}((y,i),A)&:=\int_{B_H(0,\varepsilon)}\int_0^{\infty} \lambda e^{-\lambda t}\int_{\Theta} \Bigl[\sum_{j\in I} f(w_{\theta}(S_i(t,y))+h,j) \nonumber\\
\label{p_eps}
&\quad\times\pi_{ij}(w_{\theta}(S_i(t,y))+h)\Bigr]p(S_i(t,y),\theta)\, d\theta\nu^{\varepsilon}(dh).
\end{align}

Now define the continous-time process  {$\left(\bar{Y}(t),\bar{\xi}(t)\right)_{t\geq 0}$} via interpolation by setting
\begin{equation} \label{zz} \overline{Y}(t)=S_{\xi_n}(t-\tau_n,Y_n),\;\;\;\overline{\xi}(t)=\xi_n\;\;\;\mbox{for}\;\;\;t\in\left[\tau_n,\tau_{n+1}\right),\;n\in \n_0.\end{equation}
It is easy to check that {$\left(\bar{Y}(t),\bar{\xi}(t)\right)_{t\geq 0}$} is a time-homogeneous Markov process, and that $\left(\bar{Y}(\tau_n),\bar{\xi}(\tau_n)\right)=(X_n,\xi_n)$ for $n\in\n_0$. By $(\bar{P}_{\varepsilon}^t)_{t\geq 0}$ we shall denote the Markov semigroup associated with the process $\left(\bar{Y}(t),\bar{\xi}(t)\right)_{t\geq 0}$. The dual operator of $(\cdot)\bar{P}_{\varepsilon}^t$ is then given by 
\begin{equation} \label{pt_eps} \bar{P}^t_{\varepsilon}f(y,i)=\ew(f(\overline{Y}(t),\overline{\xi}(t))\,|\, Y_0=y,\, \xi_0=i)\;\;\;\mbox{for}\;\;\;f\in B_b(X),\,(y,i)\in X.\end{equation}

\section{Reasonableness of the assumptions}\label{sec:new}
It is essential to stress that condition \hyperref[cnd:A2]{(A2)} is reachable by a quite wide class of semiflows acting on reflexive Banach spaces (in particular, Hilbert spaces). As will be clarified below, such semilows can be generated by certain differential equations involving dissipative operators. Furthermore, in many cases, condition \hyperref[cnd:A1]{(A1)} can be then easily derived from the conjunction of \hyperref[cnd:A2]{(A2)} and \hyperref[cnd:A3]{(A3)}. To justify this claim, we first repeat some relevant definitions and results (without proofs) from \cite{b:kazufami}.

Let us recall that $(H, \norma{\cdot})$ is a separable Banach space, and $Y$ stands for a closed subset of~$H$. By $(H^*,\normal{\cdot})$ we shall denote the dual space of $H$ endowed with the operator norm.  For every $u \in H$, we define
$$\mathcal{H}(u):=\{u^*\in H^*:\, u^*u=\norma{u}^2=\normal{u^*}^2=1\}.$$
It follows from the Hahn--Banach theorem that $\mathcal{H}(u)\neq\emptyset$ for every $u\in H$. In the case where $(H,\<\cdot|\cdot\>)$ is a Hilbert space, $\mathcal{H}(u)$ is a singleton consisting of $u^*:=\<\cdot|u\>$ (due to the Frechet--Riesz representation theorem and the fact that any Hilbert space is self-dual). Given a function $A:Y\to H$, we set 
$\range A:=\{u\in H:\,u=Ay\;\;\mbox{for some}\;\;y\in Y\}.$

An operator $A:Y\to H$ (not necessarily linear) is said to be \emph{dissipative} if, for any $y_1,y_2\in Y$, there exists $u^*\in\mathcal{H}(y_1-y_2)$ such that $$\Re u^*(Ay_1-Ay_2)\leq 0.$$
For $\eta\in\mathbb{R}$, the operator $A$ is called \emph{$\eta$-dissipative} whenever $A-\eta \id_Y$ is dissipative, that is
$$\Re u^*(Ay_1-Ay_2)\leq \eta\norma{y_1-y_2}^2.$$
In particular, we see that $A$ is dissipative if and only if it is $\eta$-dissipative for some $\eta\leq 0$. Clearly, if $(H,\<\cdot|\cdot\>)$ is a Hilbert space, then $A$ is $\eta$-dissipative if and only if
$$\<Ay_1-Ay_2|y_1-y_2\>\leq \eta\norma{y_1-y_2}^2\;\;\;\mbox{for any}\;\;\;y_1,y_2\in Y.$$

We now quote the result of Crandall and Liggett \cite{b:crandall}, which shows that $\eta$-dissipative operators generate semiflows with certain decent properties, leading to \hyperref[cnd:A2]{(A2)}.

\begin{remark}[\hspace{-0.15cm}\mbox{\cite[Proposition 1.9]{b:kazufami}}] 
Suppose that $\eta\in\mathbb{R}$, and that $A:Y\to H$ is an \hbox{$\eta$-dissipative} operator. Then, for any $t\in (0,1/|\eta|)$ ({by convention} $1/0:=\infty$), the operator \hbox{$\id_Y-tA:Y\to H_t$}, where $H_t=\range(\id_Y-tA)$, is invertible and $(\id_Y-tA)^{-1}$ is Lipschitz continuous with constant $(1-t\eta)^{-1}$.
\end{remark}

\begin{theorem}[Crandall--Ligget; \hspace{-0.15cm}\text{\cite[Theorem 5.3, Corollary 5.4]{b:kazufami}}] 
\label{crandall}
Let $\eta\in\mathbb{R}$, and suppose that $A:Y\to H$ is an $\eta$-dissipative operator. Further, assume that there exists $T>0$ such that
\begin{equation} \label{rang_cond} Y\subset \range(\id_Y-t A) 
\;\;\;\mbox{for}\;\;\;t\in(0,T).
\end{equation}
Then there exists a semiflow $S:\mathbb{R}_+\times Y\to Y$, which is continuous with respect to each variable and satisfies the following conditions:
\begin{equation} \label{sform} S(t,y)=\lim_{s\to 0^{+}}(\id_Y-sA)^{-\lfloor t/s \rfloor}(y)\;\;\;\mbox{for}\;\;\;y\in Y,\;t\geq 0;\end{equation}
\begin{equation} 
\label{a2A}
\norma{S(t,y_1)-S(t,y_2)}\leq e^{\eta t}\norma{y_1-y_2}\;\;\;\mbox{for}\;\;\;y_1,y_2\in Y,\; t\geq 0;
\end{equation}
\begin{equation} 
\label{a2B}
\norma{S(t,y)-S(s,y)}\leq 
\begin{cases}
(t-s)e^{\eta t}\norma{Ay}, &\eta>0,\\ 
(t-s)e^{\eta s}\norma{Ay}, &\eta\leq 0.\\ 
\end{cases}
\;\;\;\mbox{for}\;\;\;y\in Y,\;0\leq s\leq t.
\end{equation}
\end{theorem}
\noindent

Given $A:Y\to H$ and $y\in Y$, let us now consider the Cauchy problem of the form:
\begin{equation}
\label{cp}
\begin{cases}
u'(t)=Au(t),\;\;t\geq 0,\\
u(0)=y.
\end{cases}
\end{equation}
The following theorem says that, in a reflexive Banach space (e.g. in a Hilbert space), the semiflow specified by \eqref{sform} determines the unique solution of \eqref{cp}.

\begin{theorem}[\hspace{-0.15cm}\text{\cite[Theorem 5.11]{b:kazufami}}]\label{dys_rozw}
Suppose that $H$ is a reflexive Banach space. Further, assume that $A:Y\to H$ is a dissipative operator satisfying  \eqref{rang_cond} for some $T>0$, and let \hbox{$S:\mathbb{R}_+\times Y \to Y$} be the semiflow generated by $A$ in accordance with \eqref{sform}. Then, for any $y\in Y$, the map $\mathbb{R}_+\ni t \mapsto S(t,y)$ is the unique (strong) solution of~\eqref{cp}. 
\end{theorem}

Summarising the above results we can now formulate a conclusion concerning conditions \hyperref[cnd:A1]{(A1)} and \hyperref[cnd:A2]{(A2)}. Recall that the maps $p$ and $w_{\theta}$ are specified within Section~\ref{DSdef}.

\begin{corollary}\label{dys_flow}
Suppose that $H$ is a reflexive Banach space. Further, let $\eta<0$, and assume that $A_i:Y\to H$, $i\in \{1,\ldots,N\}:=I$, are $\eta$-dissipative operators satisfying  \eqref{rang_cond} for some $T>0$. Then there exist semiflows \hbox{$S_i:\mathbb{R}_+\times Y\to Y$}, $i\in I$, which are continuous with respect to each variable, such that, for any $i\in I$ and any $y\in Y$, the map $\mathbb{R}_+\ni t\mapsto S_i(t,y)$ is the unique solution of \eqref{cp} with $A=A_i$. Moreover, the following statements hold:
\begin{itemize}
\item[(1)]\phantomsection \label{cnd:d1} Suppose that there exists $y^*\in Y$ such that
\begin{equation} \label{wcond} \sup_{y\in Y}\int_0^{\infty}e^{-\lambda t}\int_{\Theta} \norma{w_{\theta}(y^*)-y^*} p(S_i(t,y),\theta)\,d\theta\,dt<\infty\;\;\;\mbox{for}\;\;\;i\in I,\end{equation}
and that at least one of the following conditions is fulfilled:
\begin{itemize}
\item[(i)] $p$ does not depend on $y$, i.e. $p(y,\theta)=\bar{p}(\theta)$ for some continuous probability density function $\bar{p}:\Theta\to\left[0,\infty\right)$, and \hyperref[cnd:A3]{(A3)} holds, that is, there exists $L_w>0$ such that 
$$\int_{\Theta} \norma{w_{\theta}(y_1)-w_{\theta}(y_2)}\bar{p}(\theta)\,d\theta\leq L_w\norma{y_1-y_2}\;\;\;\mbox{for}\;\;\;y_1,y_2\in Y;$$
\item[(ii)] there exists $L_w>0$ such that all $w_{\theta}$, $\theta\in\Theta$, are Lipschitz continuous with the~same~constant $L_w$.
\end{itemize}
Then \hyperref[cnd:A1]{(A1)} holds.
\item[(2)]\phantomsection \label{cnd:d2}  Suppose that either $I=\{1\}$ or $A_1,\ldots, A_N$ are bounded on bounded sets. Then  \hyperref[cnd:A2]{(A2)} is satisfied with $L=1$, $\alpha=\eta$ and $\mathcal{L}$ given by
$$\mathcal{L}(y)=
\begin{cases}
0 &\text{in the case where}\; I=\{1\},\\
2\max_{i\in I} \norma{A_i y} &\text{otherwise}. 
\end{cases}
$$
\end{itemize}
\end{corollary}
\begin{proof}
The existence of appropriate $S_1,\ldots,S_N$ follows from Theorem \ref{dys_rozw}. According to Theorem \ref{crandall}, {every} $S_i$ satisfies conditions \eqref{a2A} and \eqref{a2B}, which yield, in particular, that $S_i$ is continuous with respect to each variable and
\begin{equation} \label{c0}\norma{S_i(t,y)-y}\leq t\norma{A_i y}\;\;\;\mbox{for}\;\;\;y\in Y,\,t\geq 0.\end{equation}
In order to show \hyperref[cnd:d1]{(1)}, it suffices to observe that both conditions (i) and (ii) imply
\begin{align*}
\int_{\Theta} \norma{w_{\theta}(S_i(t,y^*))-y^*}&p(S_i(t,y),\theta)\,d\theta\\
&\leq L_w\norma{S_i(t,y^*)-y^*}+\int_{\Theta}\norma{w_{\theta}(y^*)-y^*}p(S_i(t,y), \theta)\,d\theta\\
&\leq L_wt\norma{A_i y^*}+\int_{\Theta}\norma{w_{\theta}(y^*)-y^*}p(S_i(t,y), \theta)\,d\theta.
\end{align*}
Let us now turn to the proof of \hyperref[cnd:d2]{(2)}. In the case where $I=\{1\}$, condition \hyperref[cnd:A2]{(A2)} is just equivalent to \eqref{a2A}. In the general case we apply \eqref{a2A} together with \eqref{c0}.
\end{proof}

\section{Main results} \label{sec: main_results}
\subsection{The Markov chain given by the post-jump locations} \label{sec:main1}

In this part of the paper, we provide a criterion on the existence of a unique invariant probability measure for the operator $P_{\varepsilon}$, corresponding to the chain $(Y_n,\xi_n)_{n\in\n_0}$, which is exponentially attracting in the Fortet--Mourier distance. Having established this, we further obtain, in a relatively simple way, the SLLN (for the discrete-time model).

\begin{theorem}\label{tw:tempo}
Suppose that conditions \textnormal{\hyperref[cnd:A1]{(A1)}-\hyperref[cnd:A5]{(A5)}} hold, and that 
\begin{equation} \label{ll} LL_w+\frac{\alpha}{\lambda}<1.\end{equation}
Then the Markov operator $P_{\varepsilon}$ generated by \eqref{p_eps} has a unique invariant probability measure~$\mu^*$ such that \hbox{$\mu^*\in\mathcal{M}_1^1(X)$}. Moreover, there exist $x^*\in X$ and constants $C\in\mathbb{R}$, \hbox{$\beta\in \left[0,1\right)$} such that
\begin{equation} \label{spec_gap} d_{FM}(\mu P_{\varepsilon}^n, \mu^*)\leq C\beta^n\left(\int_X \rho_c(x^*,x)\,(\mu+\mu^*)(dx)+1\right) \end{equation}
for all $n\in\n$ any any \hbox{$\mu\in\mathcal{M}^1_1(X)$}.
\end{theorem}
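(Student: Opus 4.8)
The plan is to deduce the statement directly from Theorem \ref{ks-stab}, applied with $P=P_{\varepsilon}$ (given by \eqref{p_eps}) and the substochastic kernel $Q=Q_{\varepsilon}$ (given by \eqref{def-q}), which has already been observed to satisfy \eqref{qeq}. First I would record the two standing hypotheses of that theorem. The Feller property of $P_{\varepsilon}$ follows from the continuity of $S_i$, $(y,\theta)\mapsto w_{\theta}(y)$, $\pi_{ij}$ and $p$: for $f\in C_b(X)$ the integrand defining $P_{\varepsilon}f$ in \eqref{p_eps} is continuous and bounded in $(y,i)$ (continuity in the discrete variable $i$ being automatic), so dominated convergence gives $P_{\varepsilon}f\in C_b(X)$. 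For the drift condition (B1) I would take the Lyapunov function $V(y,i):=\norma{y-y^*}$ (continuous, bounded on bounded sets, and coercive, hence indeed a Lyapunov function), with $y^*$ as in (A1). Since $\sum_{j}\pi_{ij}\equiv 1$, computing $P_{\varepsilon}V$ collapses the inner sum; splitting $\norma{w_{\theta}(S_i(t,y))+h-y^*}$ via the triangle inequality into a term controlled by (A3) together with (A2) in the case $i=j$ (so the $\delta_{ij}$-summand vanishes), a term absorbed into $b$ by means of (A1), and $\norma{h}\le\varepsilon$, yields exactly $P_{\varepsilon}V(y,i)\le aV(y,i)+b$ with $a,b$ as in \eqref{def:b}; here the factor $\int_0^{\infty}\lambda e^{-(\lambda-\alpha)t}\,dt=\lambda/(\lambda-\alpha)$ combines with $LL_w$ to give $a$, and the inequality $a<1$ is equivalent to the hypothesis \eqref{ll}.

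The decisive choice for conditions (B2)--(B5) is the coupling region $F:=\{((y_1,i),(y_2,i)):y_1,y_2\in Y,\ i\in I\}$, the set of pairs sharing a common flow index. Because both post-jump components in the definition of $\Lambda_h^t$ carry the same index $j$ and the same shift $h$, the support of $Q_{\varepsilon}(x_1,x_2,\cdot)$ lies in $F$ for every $(x_1,x_2)$, which settles the first bullet of (B2). On $F$ the indices coincide, so $\rho_c(x_1,x_2)=\norma{y_1-y_2}$ and the $\delta_{ij}$-term in (A2) drops out; estimating the integrand of $\int\rho_c\,Q_{\varepsilon}$ by $\mathbf p\le p(S_i(t,y_1),\cdot)$, then by (A3) and (A2), and integrating $\int_0^{\infty}\lambda e^{-(\lambda-\alpha)t}\,dt$, I obtain (B3) with $q=a$. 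For (B4) and (B5) the bounded set $T$ of \eqref{e:remT} is the key: for $t\in T$ one has $L_wLe^{\alpha t}\le L_wL\lambda/(\lambda-\alpha)=a=q$, so restricting the $t$-integration to $T$ and the $\theta$-integration to $\Theta(S_i(t,y_1),S_i(t,y_2))$ of \eqref{defit} forces the post-jump pair into $U(q\rho_c(x_1,x_2))$, and the two minorisations in (A5) give the uniform bound $Q_{\varepsilon}(x_1,x_2,U(q\rho_c))\ge\delta_{\pi}\delta_p\lambda\int_T e^{-\lambda t}\,dt>0$ required by (B4). Condition (B5) follows by bounding the mass defect $1-Q_{\varepsilon}(x_1,x_2,X^2)$: using $a-a\wedge b=(a-b)^+$ for the minima defining $\mathbf p$ and $\bpi_j$ and then applying (A4), (A2) and (A3), one gets $1-Q_{\varepsilon}(x_1,x_2,X^2)\le l\,\norma{y_1-y_2}$ with $l=(L_p+L_{\pi}L_w)L\lambda/(\lambda-\alpha)$, i.e. (B5) with $\nu=1$.

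I expect the genuine obstacle to be the exponential-moment hitting-time estimate \eqref{fastint}, the only ingredient not reducible to a single one-step computation. Here I would work with $\widetilde V(x_1,x_2):=V(x_1)+V(x_2)$ and combine two facts. First, since $B\ge Q_{\varepsilon}$ is a Markovian coupling, both its marginals evolve under $P_{\varepsilon}$, so (B1) upgrades to the coupled drift $\ew[\widetilde V(\Phi^1_{n+1},\Phi^2_{n+1})\mid\mathcal F_n]\le a\widetilde V(\Phi^1_n,\Phi^2_n)+2b$; a standard geometric-drift argument then shows that, started from $\{\widetilde V<4b/(1-a)\}$, the return times to a sublevel set $\{\widetilde V<R_0\}$ (with $R_0\ge 4b/(1-a)$) have uniformly bounded exponential moments. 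Second, the minorisation behind (B4) in fact holds for \emph{all} pairs, giving $Q_{\varepsilon}(x_1,x_2,X^2)\ge\delta_{\pi}\delta_p\lambda\int_T e^{-\lambda t}\,dt=:c_0>0$ on all of $X^2$, so each step enters $F$ with probability at least $c_0$. To reach $K=F\cap\{\widetilde V<R\}$ rather than merely $F$, I would bound, for $\widetilde V(x_1,x_2)<R_0$, the mass $Q_{\varepsilon}(x_1,x_2,\{\widetilde V\ge R\})\le(aR_0+2b)/R$ using the marginal drift and Markov's inequality; since the $Q_{\varepsilon}$-part is supported in $F$, this yields $B(x_1,x_2,K)\ge c_0-(aR_0+2b)/R$, bounded below once $R$ is taken large. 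Feeding this uniform one-step entry into $K$ together with the exponential return times to $\{\widetilde V<R_0\}$ into the standard regeneration estimate produces \eqref{fastint} for suitable $\zeta\in(0,1)$ and $\bar C$.

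With (B1)--(B5) verified, Theorem \ref{ks-stab} provides the unique invariant measure $\mu^*$ and the bound \eqref{rate}. Finally, since $V(y,i)=\norma{y-y^*}$ satisfies $\norma{y-y^*}\le\rho_c(x^*,(y,i))\le\norma{y-y^*}+c$ for $x^*:=(y^*,i^*)$ with an arbitrary fixed $i^*\in I$, rewriting \eqref{rate} in terms of $\rho_c$ — absorbing the additive constant $c$ into $C$ and into the term $+1$ — gives \eqref{spec_gap} for every $\mu\in\mathcal{M}_1^1(X)$, and in particular the asymptotic stability of $P_{\varepsilon}$.
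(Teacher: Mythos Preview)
Your proposal is correct and reaches the same conclusion via Theorem~\ref{ks-stab}, but your instantiation of that theorem differs from the paper's in one structural point: the choice of the coupling region $F$.

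The paper takes $F=F_1\cup F_2$ with $F_1=\{((y_1,i),(y_2,i))\}$ and $F_2=\{(x_1,x_2):V(x_1)+V(x_2)<R\}$, $R=4b/(1-a)$. With this choice $K=F\cap\{\widetilde V<R\}=\{\widetilde V<R\}$ is simply a sublevel set of $\widetilde V$, so \eqref{fastint} follows immediately from the coupled drift via \cite[Lemma~2.2]{b:kapica}; no regeneration argument is needed. The price is that (B3)--(B5) must also be verified on $F_2$, where the indices $i_1,i_2$ may differ; this is exactly where the $\delta_{i_1 i_2}$-terms from \hyperref[cnd:A2]{(A2)} survive and where the specific lower bound \eqref{nc} on the metric constant $c$ is used to absorb them (Steps~3--5 of the paper's proof).

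You take the smaller $F=F_1$. This makes (B3)--(B5) clean---on $F_1$ the $\delta_{i_1 i_2}$-terms vanish, $\rho_c$ reduces to $\norma{y_1-y_2}$, and the particular value of $c$ plays no role---but it forces a genuine argument for (B2), since $K=F_1\cap\{\widetilde V<R\}$ is no longer a pure sublevel set. Your regeneration scheme (uniform one-step entry into $K$ from $\{\widetilde V<R_0\}$ via the global minorisation $Q_{\varepsilon}(\cdot,\cdot,X^2)\ge\delta_{\pi}\delta_p$ from \hyperref[cnd:A5]{(A5)}, combined with exponential-moment returns to $\{\widetilde V<R_0\}$ from the drift) is standard and valid; it does, however, require more work than the single citation the paper uses, and you should be prepared to spell out the geometric-trials estimate carefully, since the return times and the success/failure events are not independent. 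In exchange, your route shows that the theorem actually holds for every $c>0$ in \eqref{def:metric}, not only for $c$ satisfying \eqref{nc}.
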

The proof of the foregoing result (given in Section \ref{sec:proof1}) is based on the asymptotic coupling method introduced in \cite{b:hairer}. More precisely, we use \cite[Theorem 2.1]{b:kapica}, which gives sufficient conditions for a general Markov chain (in terms of its Markovian coupling) to be exponentially ergodic in the sense described above.

As a straightforward consequence of Theorem \ref{tw:tempo}, we deduce a result that refers to stability of {$(Y_n)_{n\in\n}$} itself (for the proof, see  Section \ref{sec:proof1}). In what follows, we write $\widetilde{\mu}_n$ for the distribution of $Y_n$.

\begin{corollary} \label{col: powrot}
Suppose that the hypotheses of Theorem \ref{tw:tempo} hold. Further, let \hbox{$\mu^*\in\mathcal{M}_1^1(X)$} be the unique invariant probability measure for $P_{\varepsilon}$, and define
$$\widetilde{\mu}^*(B)=\mu^*(B\times I)\;\;\;\mbox{for}\;\;\;B\in\mathcal{B}(Y).$$
Then
\begin{itemize}
\item[(1)]\phantomsection\label{cnd:1_cor} If $Y_0$ has the distribution $\widetilde{\mu}_0=\widetilde{\mu}^{*}$ and
$\pr(\xi_0=i|Y_0=y)=\pi_i(y)$, $i\in I$, $y\in Y$, where $\pi_i$ is the Radon--Nikodym derivative of $\mu^*(\cdot \times\{i\})$ with respect to $\widetilde{\mu}^{*}$, then $\widetilde{\mu}_n=\widetilde{\mu}^{*}$ for all $n\in\n$.
\item[(2)]\phantomsection\label{cnd:2_cor} There exists $\beta\in \left[0,1\right)$ with the property that for every distribution \hbox{$\widetilde{\mu}_0\in\mathcal{M}_1^1(Y)$} of $Y_0$ we may find a constant $\widetilde{C}(\widetilde{\mu}_0)\in\mathbb{R}$ such that
$$d_{FM}(\widetilde{\mu}_n,\widetilde{\mu}^*)\leq \widetilde{C}(\widetilde{\mu}_0)\,\beta^n\;\;\;\mbox{for}\;\;\;n\in\n.$$
\end{itemize}
\end{corollary}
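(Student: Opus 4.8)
The plan is to deduce both claims directly from Theorem~\ref{tw:tempo}, using two elementary observations: that the prescribed initial law in part~(1) is precisely $\mu^*$, and that passing to the $Y$-marginal does not increase the Fortet--Mourier distance.

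For part~(1), I would first check that, under the stated hypotheses, the joint law of $(Y_0,\xi_0)$ coincides with $\mu^*$. Indeed, for $B\in\mathcal{B}(Y)$ and $i\in I$,
$$
\pr((Y_0,\xi_0)\in B\times\{i\})=\int_B \pr(\xi_0=i\,|\,Y_0=y)\,\widetilde{\mu}^*(dy)=\int_B \pi_i(y)\,\widetilde{\mu}^*(dy)=\mu^*(B\times\{i\}),
$$
the last equality being exactly the definition of $\pi_i$ as the Radon--Nikodym derivative of $\mu^*(\cdot\times\{i\})$ with respect to $\widetilde{\mu}^*$. Since $\mu^*$ is invariant for $P_{\varepsilon}$, the law $\mu_n$ of $(Y_n,\xi_n)$ equals $\mu^*$ for every $n$, whence $\widetilde{\mu}_n(B)=\mu_n(B\times I)=\mu^*(B\times I)=\widetilde{\mu}^*(B)$, giving $\widetilde{\mu}_n=\widetilde{\mu}^*$.

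For part~(2), the key step is the contraction estimate $d_{FM}(\widetilde{\mu}_n,\widetilde{\mu}^*)\le d_{FM}(\mu_n,\mu^*)$. To obtain it I would observe that every $f\in\mathcal{R}_{FM}(Y)$ lifts to $g(y,i):=f(y)$ belonging to $\mathcal{R}_{FM}(X)$: clearly $|g|\le 1$, and $|g(y_1,i_1)-g(y_2,i_2)|=|f(y_1)-f(y_2)|\le\norma{y_1-y_2}\le\rho_c((y_1,i_1),(y_2,i_2))$ by \eqref{def:metric}. Since $\<g,\mu_n\>=\<f,\widetilde{\mu}_n\>$ and $\<g,\mu^*\>=\<f,\widetilde{\mu}^*\>$, taking the supremum over $f\in\mathcal{R}_{FM}(Y)$ yields the asserted inequality. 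I would then fix any initial law $\mu_0\in\mathcal{M}_1(X)$ whose $Y$-marginal is $\widetilde{\mu}_0$ (for instance the law produced by any admissible conditional distribution of $\xi_0$ given $Y_0$), and check $\mu_0\in\mathcal{M}_1^1(X)$ using $\rho_c(x^*,x)\le\norma{y-y^*}+c$ together with $\widetilde{\mu}_0\in\mathcal{M}_1^1(Y)$. Applying the spectral gap bound \eqref{spec_gap} to $\mu_0$ and combining it with the contraction gives
$$
d_{FM}(\widetilde{\mu}_n,\widetilde{\mu}^*)\le d_{FM}(\mu_0 P_{\varepsilon}^n,\mu^*)\le C\beta^n\Bigl(\int_X\rho_c(x^*,x)\,(\mu_0+\mu^*)(dx)+1\Bigr),
$$
so it suffices to take $\widetilde{C}(\widetilde{\mu}_0)$ equal to the bracketed factor.

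There is no serious obstacle here; the only point requiring a little care is that $\widetilde{\mu}_0$ does not by itself determine the full initial law $\mu_0$ on $X$. This is harmless, because bounding $\int_X\rho_c(x^*,x)\,\mu_0(dx)\le\int_Y\norma{y-y^*}\,\widetilde{\mu}_0(dy)+c$ shows that the rate constant can be chosen to depend on $\widetilde{\mu}_0$ alone, together with the fixed quantities $c$ and $\int_X\rho_c(x^*,x)\,\mu^*(dx)$; the finiteness of $I$ and the form of $\rho_c$ make the $\mu_0$-integral controllable uniformly over the auxiliary choice of the law of $\xi_0$.
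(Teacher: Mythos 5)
Your proof is correct and follows essentially the same route as the paper's: part (1) by identifying the joint initial law with $\mu^*$ and invoking invariance, and part (2) by lifting test functions $f\in\mathcal{R}_{FM}(Y)$ to $(y,i)\mapsto f(y)\in\mathcal{R}_{FM}(X)$ to get $d_{FM}(\widetilde{\mu}_n,\widetilde{\mu}^*)\leq d_{FM}(\mu_n,\mu^*)$ and then applying \eqref{spec_gap}. The only difference is that you spell out the verification that the joint initial law lies in $\mathcal{M}_1^1(X)$ and that the constant depends on $\widetilde{\mu}_0$ alone, a detail the paper leaves implicit.
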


Theorem \ref{tw:tempo} allows us to show the SLLN for the Markov chain $(f(Y_n,\xi_n))_{n\in\n_0}$, where $f\in Lip_b(X)$. This can be done by appealing to a general result of A. Shirikyan \cite{b:shir} (see Section \ref{sec:proof2}). 

\begin{theorem}[SLLN for the discrete-time model]\label{tw:slln-dyskr}
Suppose that conditions \textnormal{\hyperref[cnd:A1]{(A1)}-\hyperref[cnd:A5]{(A5)}} and \eqref{ll} hold. Then, for every \hbox{$f\in Lip_b(X)$} and each initial state $x\in X$, we have
$$\lim_{n\to\infty} \frac{1}{n}\sum_{k=0}^{n-1} f(Y_k,\xi_k)=\<f,\mu^*\>\;\;\;\pr_x \mbox{- almost surely (a.s.)},$$
where $\mu^*$ is the unique invariant distribution for the Markov operator $P_{\varepsilon}$ (which exists by Theorem \ref{tw:tempo}).
\end{theorem}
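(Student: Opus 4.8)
The strategy is to apply Theorem~\ref{tw:slln-sh} to the chain $(X_n)_{n\in\n_0}=(Y_n,\xi_n)_{n\in\n_0}$ on $E=X$, with transition operator $P=P_{\varepsilon}$. Conditions \textnormal{\hyperref[cnd:A1]{(A1)}--\hyperref[cnd:A5]{(A5)}} together with \eqref{ll} are precisely the hypotheses of Theorem~\ref{tw:tempo}, so \textnormal{\hyperref[cnd:C1]{(C1)}} is immediate: $P_{\varepsilon}$ possesses a unique invariant distribution $\mu^*\in\mr(X)$. What remains is to read off \textnormal{\hyperref[cnd:C2]{(C2)}} from the spectral-gap estimate \eqref{spec_gap} and \textnormal{\hyperref[cnd:C3]{(C3)}} from the Lyapunov inequality \eqref{eq:<aV+b} proven in Step~1 of Theorem~\ref{tw:tempo}.

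First I would record that $\mu^*\in\mathcal{M}_1^1(X)$. Iterating $P_{\varepsilon}V\le aV+b$ (with $V$ from \eqref{defV} and $a\in(0,1)$, $b$ from \eqref{def:b}) and using $\mu^*P_{\varepsilon}=\mu^*$, a routine truncation argument (replace $V$ by $V\wedge m$ and let $m\to\infty$ via monotone convergence) gives $\<V,\mu^*\>\le b/(1-a)<\infty$. Since $\rho_c(x^*,x)\le V(x)+c$ for $x^*=(y^*,i^*)$, it follows that $m^*:=\int_X\rho_c(x^*,u)\,\mu^*(du)<\infty$, so $\mu^*\in\mathcal{M}_1^1(X)$, and trivially $\delta_x\in\mathcal{M}_1^1(X)$ for every $x\in X$.

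For \textnormal{\hyperref[cnd:C2]{(C2)}}, fix $f\in Lip_b(X)$ with $f\ne 0$. By the definition \eqref{lip_n} of $\norma{f}_{Lip}$, the normalized function $f/\norma{f}_{Lip}$ is bounded by $1$ and $1$-Lipschitz, hence lies in $\mathcal{R}_{FM}(X)$. Using the duality \eqref{duality} to write $P_{\varepsilon}^n f(x)=\<f,\delta_x P_{\varepsilon}^n\>$, I obtain
$$\bigl|P_{\varepsilon}^n f(x)-\<f,\mu^*\>\bigr|=\norma{f}_{Lip}\,\Bigl|\Bigl\<\tfrac{f}{\norma{f}_{Lip}},\,\delta_x P_{\varepsilon}^n-\mu^*\Bigr\>\Bigr|\le \norma{f}_{Lip}\,d_{FM}(\delta_x P_{\varepsilon}^n,\mu^*).$$
Applying \eqref{spec_gap} with $\mu=\delta_x$ (legitimate since $\delta_x\in\mathcal{M}_1^1(X)$) bounds the right-hand side by $\norma{f}_{Lip}\,C\beta^n(\rho_c(x^*,x)+m^*+1)$. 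Thus \textnormal{\hyperref[cnd:C2]{(C2)}} holds with the continuous function $\varphi(x):=\rho_c(x^*,x)+m^*+1$ and $\gamma_n:=C\beta^n$, which is summable because $\beta\in[0,1)$ (if $\beta=0$, replace it by any value in $(0,1)$, which only weakens the estimate; one may also take $C>0$ without loss).

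For \textnormal{\hyperref[cnd:C3]{(C3)}}, iterating \eqref{eq:<aV+b} gives $\ew_x V(X_n)=P_{\varepsilon}^n V(x)\le a^nV(x)+b\sum_{k=0}^{n-1}a^k\le V(x)+b/(1-a)$ for all $n\in\n_0$. Combined with $\rho_c(x^*,X_n)\le V(X_n)+c$, this yields
$$\ew_x\varphi(X_n)\le \ew_x\rho_c(x^*,X_n)+m^*+1\le V(x)+\tfrac{b}{1-a}+c+m^*+1=:h(x),$$
with $h$ continuous, establishing \textnormal{\hyperref[cnd:C3]{(C3)}}. All hypotheses of Theorem~\ref{tw:slln-sh} then hold, and the SLLN for $(f(X_n))_{n\in\n}$ follows. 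The proof is essentially a verification, so no deep obstacle arises; the only genuinely delicate points are the normalization matching $Lip_b(X)$ against the Fortet--Mourier class $\mathcal{R}_{FM}(X)$, and confirming the finiteness $\mu^*\in\mathcal{M}_1^1(X)$ so that \eqref{spec_gap} may be invoked at the Dirac masses $\delta_x$.
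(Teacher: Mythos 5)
Your proposal is correct and follows essentially the same route as the paper's proof: you verify conditions (C1)--(C3) of Theorem~\ref{tw:slln-sh} via Theorem~\ref{tw:tempo}, the spectral-gap bound \eqref{spec_gap} applied at Dirac measures with the normalization $f/\norma{f}_{Lip}\in\mathcal{R}_{FM}(X)$, and the iterated Lyapunov inequality \eqref{eq:<aV+b}, exactly as the paper does. Your only addition is the explicit truncation argument showing $\<V,\mu^*\><\infty$ (hence $\mu^*\in\mathcal{M}_1^1(X)$), a finiteness point the paper uses implicitly when it defines its comparison function $\varphi=V+D$.
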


\subsection{The continuous-time model} \label{sec:cont}

Throughout this section we assume that  $\Theta$, i.e. the set of indexes of the transformations \hbox{$y\mapsto w_{\theta}(y)$}, is endowed with a finite measure $\vartheta$. The main result here asserts that there is a one-to-one correspondence between invariant measures of the operator $P_{\epsilon}$ and invariant measures of the semigroup $(\bar{P_{\varepsilon}}^t)_{t\geq 0}$, which governs the continuous-time process $(\overline{Y}(t),\overline{\xi}(t))_{t\geq 0}$. This, in turn, yields the existence and uniqueness of an invariant probality measure for the semigroup $(\bar{P_{\varepsilon}}^t)_{t\geq 0}$ and enables us to prove the SLLN for the PDMP under consideration.

The above-mentioned correspondence can be described explicitly with the help of the Markov operators associated with the stochastic kernels $G,W: X\times \mathcal{B}(X)\to\left[0,1\right]$ of the forms:
\begin{gather}
\label{def_G}
G((y,i),A)=\int_0^{\infty} \lambda e^{-\lambda t} \mathbbm{1}_A(S_i(t,y),i)\,dt,\\
\label{def_W}
W ((y,i),A)=\sum_{j\in I} \int_{B_H(0,\varepsilon)} \int_{\Theta} \mathbbm{1}_A(w_{\theta}(y)+h,j)\pi_{ij}(w_{\theta}(y)+h)p(y,{\theta})\,d\theta\,\nu^{\varepsilon}(dh).
\end{gather}
Recall that, according to the convention adopted earlier, Markov operators are denoted by the same symbols as those used for the stochastic kernels which generate them.

\begin{theorem}\label{tw:rel}
Let $P_{\varepsilon}$ and $(\bar{P}_{\varepsilon}^t)_{t\geq 0}$ denote the Markov operator and the Markov semigroup corresponding to \eqref{p_eps} and \eqref{pt_eps}, respectively.
\begin{itemize}
\item[(1)]\phantomsection\label{cnd:1r} If $\mu^*\in\mathcal{M}_1(X)$ is an invariant measure for the Markov operator $P_{\varepsilon}$ then $\nu^*:=\mu^* G$ is an invariant  measure for the Markov semigroup $(\bar{P}_{\varepsilon}^t)_{t\geq 0}$ and $\nu^* W = \mu^*$.
\item[(2)]\phantomsection\label{cnd:2r} If $\nu^*\in\mathcal{M}_1(X)$ is an invariant measure for the Markov semigroup $(\bar{P}_{\varepsilon}^t)_{t\geq 0}$ then \hbox{$\mu^*:=\nu^* W$} is an invariant  measure for the Markov operator $P_{\varepsilon}$ and $\mu^* G=\nu^*$.
 \end{itemize}
\end{theorem}
\noindent Our proof of the above theorem, given in Section \ref{sec:proof3}, uses similar techniques to those developed in \cite[Theorem 5.3.1]{b:horbacz_diss} and \cite[Proposition 2.1 and 2.4]{b:benaim1}. 

Combining Theorems \ref{tw:tempo} and \ref{tw:rel} immediately gives
\begin{corollary}\label{inv_exist_pt}
Suppose that conditions \textnormal{\hyperref[cnd:A1]{(A1)}-\hyperref[cnd:A5]{(A5)}} and \eqref{ll} hold. Then $(\bar{P}_{\varepsilon}^t)_{t\geq 0}$, determined {by \eqref{pt_eps}}, has a unique invariant probability measure.
\end{corollary}

Letting $\overline{\mu}_t$ denote the distribution of $\overline{Y}(t)$, $t\geq 0$, we can also easily conclude the analogue of assertion \textnormal{\hyperref[cnd:1_cor]{(1)}} of Corollary~\ref{col: powrot}:
\begin{corollary} \label{col: powrot2}
Suppose that the hypotheses of Corollary \ref{inv_exist_pt} hold, and let $\nu^*\in\mathcal{M}_1(X)$ be the unique invariant probability measure for the Markov semigroup $(\bar{P}_{\varepsilon}^t)_{t\geq 0}$. Define
$$\overline{\nu}^*(B)=\nu^*(B\times I)\;\;\;\mbox{for}\;\;\;B\in\mathcal{B}(Y).$$
If $Y_0$ has the distribution $\overline{\mu}_0=\overline{\nu}^{*}$ and $\pr(\xi_0=i|Y_0=y)=\pi_i(y)$, $i\in I$, $y\in Y$, where $\pi_i$ is the Radon--Nikodym derivative of $\nu^*(\cdot \times\{i\})$ with respect to $\overline{\nu}^{*}$, then $\overline{\mu}_t=\overline{\nu}^{*}$ for all $t\geq 0$.
\end{corollary}

Theorems \ref{tw:slln-dyskr} and \ref{tw:rel} will allow us to prove (see Section \ref{sec:proof4}) a version of the SLLN for the PDMPs $\left(f\left(\overline{Y}(t),\overline{\xi}(t)\right)\right)_{t\geq 0}$ with $f\in Lip_b(X)$.

\begin{theorem}[SLLN for the PDMP] \label{SLLN_cont}
Suppose that conditions \textnormal{\hyperref[cnd:A1]{(A1)}-\hyperref[cnd:A5]{(A5)}} hold with a bounded (or, which is the same thing, constant) \hbox{$\mathcal{L}:Y\to\mathbb{R}_+$}, and that \eqref{ll} is satisfied. Then, for any \hbox{$f\in Lip_b(X)$} and any initial state \hbox{${(y,i)}\in X$}, we have
$$\lim_{t\to\infty} \frac{1}{t} \int_0^t f\left({\overline{Y}(s),\overline{\xi}(s)}\right)\,ds=\<f,\nu^*\>\;\;\;\pr_{(y,i)} - a.s.$$
where $\nu^*$ stands for the unique invariant distribution of  {$\left(\overline{Y}(t),\overline{\xi}(t)\right)_{t\geq 0}$} (which exists by Corollary \ref{inv_exist_pt}).
\end{theorem}

The additional assumption regarding the function $\mathcal{L}$, which appears in \hyperref[cnd:A2]{(A2)}, ensures that the operator $B(X)\ni f\to Gf$ preserves the Lipschitz continuity. This is necessary for our proof method to work, as it enables us to apply Theorem \ref{tw:slln-dyskr} for the Markov chain $\left(Gf({Y_n,\xi_n})\right)_{n\in\n_0}$. Obviously, the above-mentioned requirement is always fulfilled if the process evolves according to only one semiflow.

\section{Applications in gene expression analysis}
\label{section:example}
\subsection{A continuous-time model of prokaryotic gene expression}
\label{section:example1}
Let us describe the dynamical system which occurs in a simple model of gene expression in the presence of transcriptional bursting (cf. \cite{b:mackey_tyran}; for biological aspects, see \cite[Ch.8]{b:biotech1} or \cite[Ch.3]{b:biotech2}). To be more precise, we focus on the prokaryotic (bacterial) gene expression. Genes in prokaryotes are frequently organised in the so-called operons, that is, small groups of related structural genes, which are transcribed at the same time as a unit into a single polycistronic mRNA (which  encodes more than one protein). Typically, the proteins encoded by genes within the same operon interact in some way; for instance, the \emph{lac} operon in bacterium Escherichia coli has three genes involved in the uptake and breakdown of lactose. 

Consider a prokaryotic cell and a single operon containing $d$ structural genes.  Let $t\geq 0$ denote the age of the cell, and suppose that
$\yy(t)=(y_1(t),\ldots,y_d(t))\in\mathbb{R}_+^d$
describes the concentration of $d$ different protein types encoded by the genes within the~operon.

The protein molecues undergo degradation, which is interrupted by transcription occuring in the so-called \emph{bursts}, followed by variable periods of inactivity. As mentioned earlier, the bursts appear simultaneously (at random moments) for all protein types encoded by the genes in the operon. From the biological point of view it is quite natural to assume that the burst onset times, say $0<\tau_1<\tau_2<\ldots$, are separated by exponentially distributed random time intervals $\Delta\tau_1,\Delta\tau_2,\ldots$ (where $\Delta\tau_n:=\tau_n-\tau_n-1$, $\tau_0:=0$) having the same intensity~$\lambda$. Moreover, we require that $\tau_n\to\infty$ (as $n\to\infty$). Since a prokaryotic mRNA can be efficiently transcribed and translated at the same time (because of the lack of nucleus), $\tau_n$ determine the moments of production at once. 

The rate of protein degradation depends on the current amount of the gene product. Moreover, taking into account that every burst may somehow influence the degradation dynamics, we shall assume that the rate is described by a finite collection of continuous vector fields $\dd_i:\mathbb{R}_+^d\to\mathbb{R}_+^d$, $i\in I=\{1,\ldots,N\}$, which are switched randomly from burst to burst. We require that, for some $\alpha<0$, all the operators $-\dd_i$, $i\in I$, are $\alpha$-dissipative, \hbox{i.e.}
\begin{equation} \label{diss} \<y_1-y_2, \dd_i(y_1)-\dd_i(y_2)\>\geq  -\alpha\norma{y_1-y_2}^2\;\;\;\mbox{for}\;\;\;y_1,y_2\in\mathbb{R}_+^d,\; i\in I,
\end{equation}
and that there exists $T>0$ for which
\begin{equation} \label{diss1} \range(\id_{\mathbb{R}_+^d}+t\dd_i)=\mathbb{R}_+^d \;\;\;\mbox{for all}\;\;\;t\in (0,T),\,i\in I.
\end{equation}
A simple example of an operator satisfying such conditions is the map $\left(y_1,\ldots,y_d\right)\mapsto(a_1 y_1,\ldots,a_n y_n)$ with positive $a_1,\ldots,a_d>0$, which can be interpreted as the different degradation rates for each protein. 

It then follows from Theorem \ref{dys_rozw} that there exist semiflows $S_i:\left[0,\infty\right)\times\mathbb{R}_+^d\to\mathbb{R}_+^d$, $i\in I$, such that, for any $i\in I$ and any $y\in Y$, the map $\mathbb{R}_+\ni t\mapsto S_i(t,y)$ is the unique solution of the following Cauchy problem:
\begin{equation} \label{ODE}
\begin{cases}
\yy'(t)=-\dd_i(\overline{Y}(t)),\\
\yy(0)=y.
\end{cases}
\end{equation}
Then the amount of the gene product between two consecutive bursts, say in the time interval $\left[\tau_{n-1},\tau_n\right)$, is described by $t\mapsto S_{\xi_n}(t-\tau_{n-1},\overline{Y}(\tau_{n-1}))$ whenever $\dd_{\xi_n}$ determines the degradation rate in this period. We assume that  $\xi_0,\xi_1,\ldots$, indicating the degradation rates between successive bursts, are $I$-valued random variables, for which the probabilities
\begin{equation}
\label{biol_pi}
\pi_{ij}(y):=\pr(\xi_{n+1}=j\;|\;\overline{Y}(\tau_{n+1})=y,\; \xi_{n}=i),\;\;\;y\in Y,\,i,j\in I,
\end{equation}
does not depend on $n$ and form a matrix consisting of continuous functions of $y$.

Let $\theta_n$ be a random variable with values in $\Theta:=\left[0,m_{\Theta}\right]^d$ (for some positive $m_{\Theta}\in\mathbb{R}$), which describes the amount of  proteins produced (by the genes in the operon) at time $\tau_n$. Clearly, the number of new translations (and so the amount of newly produced proteins) can be different for each of $d$ genes in the operon. The process $(\yy(t))_{t\geq 0}$ then changes from $\yy(\tau_k-)$ to $\yy(\tau_k)=\yy(\tau_k-)+\theta_k$ for $k\in\n$. We assume that $\theta_n$ depends only on the current amount of the gene product. More precisely, we require that
\begin{equation}
\label{biol_p}
\pr(\theta_n\in E \; | \yy(\tau_n-)=y)=\int_E p(y,\theta)\,d\theta,\;\;\;E\in\mathcal{B}(\Theta),
\end{equation}
where \hbox{$p: Y\times \Theta \to \left[0,\infty\right)$} is a continuous function satisfying $\int_{\Theta} p(y,\theta)\,d\theta=1$ for $y\in \mathbb{R}_+^d$. It is quite natural to expect that the variables  $\tau_n$, $\xi_n$ and $\theta_n$ satisfy the independence conditions detailed in Section \ref{DSdef}.

Suppose that the initial amount of the gene product (from the operon) is described by a random variable $\yy(0)$ with an arbitrary (and fixed) distribution. Then, letting $w_{\theta}(y)=y+\theta$ for $\theta\in\Theta$ and $y\in \mathbb{R}_+^d$, we see that for each $n\in\n$ and given $\yy(\tau_n)$, the process $(\yy(t))_{t\geq 0}$ evolves as
\begin{equation}
\label{ge_mod}
\yy(t)=
\begin{cases}
S_{\xi_n}\left(t-\tau_n,\overline{Y}(\tau_n)\right) &\mbox{for}\;\;t\in\left[\tau_n,\tau_{n+1}\right),\\
w_{\theta_{n+1}}\left(S_{\xi_n}(\Delta \tau_{n+1},\overline{Y}(\tau_n)\right)) &\mbox{for}\;\;t=\tau_{n+1}.
\end{cases}
\end{equation}
Such a dynamical system has the same form as $(\overline{Y}(t))_{t\geq 0}$ defined in Section \ref{DSdef}. 

To apply the results of Section \ref{sec: main_results}, observe that the model described above satisfies conditions \textnormal{\hyperref[cnd:A1]{(A1)}-\hyperref[cnd:A3]{(A3)}} and \eqref{ll}. In fact, since all $w_{\theta}$ are Lipschitz continuous with the same constant, and, due to the definition of $\Theta$, $$\int_{\Theta} \norma{w_{\theta}(0)}\,p(S_i(t,y),\theta)\,d\theta=\int_{\Theta} \norma{\theta}\,p(S_i(t,y),\theta)\,d\theta\leq \sqrt{d}\,m_{\Theta}<\infty\;\;\;\mbox{for all}\;\;\;y\in Y,\,i\in I,$$
it follows from Corollary \ref{dys_flow}\hyperref[cnd:d1]{(1)} that  \hyperref[cnd:A1]{(A1)} is satisfied with $y^*=0$. Further, in view of \hbox{Corollary \ref{dys_flow}\hyperref[cnd:d2]{(2)}} and continuity of $\dd_i$, $i\in I$, condition \hyperref[cnd:A2]{(A2)} holds with $L=1$, the dissipativity constant equal to $\alpha$ and $\mathcal{L}(y)=2\max_{i\in I}\norma{\dd_i(y)}$. Finally, \hyperref[cnd:A3]{(A3)} is trivially fulfilled with $L_w=1$. Clearly, for such $L$, $L_w$ and $\alpha$, we also obtain~\eqref{ll}.

Let us now consider the Markov process $(\overline{Y}(t),\overline{\xi}(t))_{t\geq 0}$ with $\overline{\xi}(t):=\xi_n$ for $t\in\left[\tau_n,\tau_{n+1}\right)$, determined by \eqref{ODE}-\eqref{ge_mod}. Assuming that the phase space $\mathbb{R}_+\times I$ is equipped with the metric $\rho_c$, given by \eqref{def:metric}, wherein $c$ is a sufficiently large constant (determined in Section \ref{sec:proofs}), we can use the results of Section \ref{sec:cont}
to provide the SLLN for such a process.
\begin{proposition}
Suppose that the maps $\mathcal{D}_i$, $i\in I$, satisfy conditions \eqref{diss}-\eqref{diss1}, and, additionally, that they are bounded in the case where $I$ consists of more than one element. Further, assume that \hbox{\textnormal{\hyperref[cnd:A4]{(A4)}} and \textnormal{\hyperref[cnd:A5]{(A5)}}} hold for $\{\pi_{ij}:\,i,j\in I\}$ and $p$, determined by \eqref{biol_pi} and \eqref{biol_p}, respectively. Then $(\overline{Y}(t),\overline{\xi}(t))_{t\geq 0}$ has a unique invariant distribution $\nu^*$ such that $\nu^*\in\mathcal{M}_1(\mathbb{R}_+^d)$, and, for any $f\in Lip_b(\mathbb{R}_+^d\times I)$ and any \hbox{$(y,i)\in \mathbb{R}_+^d\times I$},
$$\lim_{t\to\infty}\frac{1}{t} \int_0^t f(\overline{Y}(s),\overline{\xi}(s))\,ds=\<f,\nu^*\>\;\;\;\pr_{(y,i)} \mbox{- a.s.}$$
Moreover, if $\overline{Y}(0)$ has the distribution $\overline{\nu}^*:=\nu^*(\cdot\times I)$, then there exists a probability \hbox{vector $(\pi_i)_{i\in I}$} consisting of $\pi_i:\mathbb{R}_+^d\to\left[0,1\right]$, $i\in I$, such that, whenever
$$\pr(\xi_0=i\,|\, \overline{Y}(0)=y)=\pi_i(y)\;\;\;\mbox{for all}\;\;\;y\in \mathbb{R}_+^d,\;i\in I,$$
then all the variables $\overline{Y}(t)$, $t\geq 0$, are identically distributed.
\end{proposition}

\subsection{A discrete-time model for an autoregluted gene in bacterium}
\label{section:example2}
It is also worth noting that the discrete-time dynamical system of the form \eqref{def:Y_n} includes, as a special case, the abstract model introduced in \cite{b:HHS}, which provide the mathematical background for modelling  the expression of an autoregulated gene in bacterium.

The protein produced from the gene, say A, affects (indirectly) its own expression. Typically, it needs, however, to be first activated, i.e. undergo certain chemical transformations, like phosphorylation (P) and dimerisation (D). \hbox{We can write them symbolically, as}
\begin{equation}
\label{act_sys}
A+P \rightleftarrows AP\;\;\;\mbox{and}\;\;\; 2AP \rightleftarrows D.
\end{equation}
The molecular species A, AP and D diffuse through the cytoplasm of the bacterial cell and are subject to degradation. Suppose that the cytoplasm is represented by an open bounded set $U\subset\mathbb{R}^3$ (with the closure $\overline{U}$), and let $H:=C_b(\overline{U})^3$  be the Banach space endowed with the norm $\norma{y}:=\norma{u}_{\infty}+\norma{v}_{\infty}+\norma{z}_{\infty},$ $y =(u,v,z)\in H$. Further, define \hbox{$H_+:=C_+(\overline{U})^3$,} where $C_+(\overline{U})$ stands for the closed cone of non-negative functions in $C_b(\overline{U})$. The concentration of the compounds A, AP and D can be represented (at any fixed time) by a map \hbox{$y =(u,v,z)\in H_+$}. The deterministic evolution of $y$ (in time $\theta\in \Theta$) is governed by a system of three \emph{reaction-diffusion} differential equations in $C_b(\overline{U})$ with Neumann's type boundary conditions (cf. \cite{b:HHS}). The system has the form
\begin{equation}
\label{act_diff}
\partial_t y(\theta)=D\nabla^2 y(t)+F\circ y(\theta)-\Lambda y(\theta).
\end{equation}
Here \hbox{$F:\mathcal{D}_F\supset\mathbb{R}^3\to \mathbb{R}^3$} is the reaction term (expressed explicitly in \cite{b:HHS}) modelling the activation system~\eqref{act_sys} of A in absence of degradation, and $D,\Lambda\in \mathbb{R}^3\times \mathbb{R}^3$ are diagonal matrices with positive entries on the diagonal, which represent the diffusion constants and degradation rates, respectively, for each of the compounds. Naturally, $\nabla^2$ denotes the Laplace operator. As shown in \cite[Proposition 5.1]{b:HHS}, for each initial condition $y_0\in H_+$, the initial problem associated with \eqref{act_diff} has a unique \emph{mild solution} (see \cite{b:paz}) $\Theta \ni \theta \mapsto \widetilde{\mathcal{W}}(\theta,y_0)\in H_+$, and the corresponding semiflow $\mathcal{\widetilde{W}}$ is continuous.

Suppose that bursts of the gene product A appear at random times $0<\theta_1'<\theta_2'<\ldots$ within the interval $\Theta=\left[0,m_{\Theta}\right]$ (where $m_{\Theta}>0$ is an appropriately large number), and define $\theta_0':=0,\;\;\theta_n:=\theta_n'-\theta_{n-1}'$ for $n\in\n$. Further, let $h_n^A$ denote the amount of protein~A added to the system at time $\theta_n$. We assume that $h_n^A=h_n+\bar{n}f^A$, where $(h_n)_{n\in\n}$ is a seqence of $C_b(\bar{U})$-valued random variables with the same distribution, supported on a ball $B_{C_b(\bar{U})}(0,\varepsilon)$, whilst $f_A\in C(\overline{U})_+\backslash\{0\}$ and $\bar{n}$ stand for some probability density function and some positive integer, respectively. The state $Y_{n+1}$ of the activation system \eqref{act_sys} just after the burst at time $\theta_{n+1}'$ is then determined by the recursive formula
\begin{equation}
\label{hhs1} 
Y_{n+1}=\mathcal{W}(\theta_{n+1}, Y_n)+H_{n+1},\;\;\;n\in\n_0,
\end{equation}
where $\mathcal{W}:\Theta\times H_+ \to H_+$ is given by $\mathcal{W}(\theta,y):=\widetilde{W}(\theta,y )+\bar{n}(f_A,0,0)$, and $H_n=(h_n,0,0)$.
 Clearly, $(H_n)_{n\in\n}$ is then a sequence of random variables with values in $H$ and a common distribution $\nu^{\varepsilon}$, supported on $B_H(0,\varepsilon)$.

The model takes into account (in contrast to the one described in Section~\ref{section:example1}) that the onset time of the burst depends on the current distribution of the gene product. Namely, it is required that
\begin{equation}
\label{hhs1b} 
\pr(\theta_{n+1}\in D\,|\, Y_n=y )=\int_D p(y ,\theta)\,d\theta,\;\;\;D\in \mathcal{B}(\Theta), \;n\in\n,
\end{equation}
where $p:H\times \Theta\to\left[0,\infty\right)$ is a continuous map satisfying $\int_{\Theta} p(y,\theta)\,d\theta=1$ for any $y\in H$. It~is worth noting here that, from biological point of view, each of the maps $p(\cdot,\theta)$, $\theta\in\Theta$, in fact, depends solely on $y_3$, since only the dimer D affects the transcription.

In addition to the above, we also assume that there exists $\bar{\theta}>0$ such that $p(\cdot,\theta)\equiv 0$ for all $\theta\leq \bar{\theta}$ (which is, incidentally, reasonable from the biological viewpoint). Under this assumption, the process $(Y_n)_{n\in\n_0}$ can be restricted to a closed subset of $H_+$, which is crucial for application of our general results. To be more precise, according to \cite[Lemma 5.4]{b:HHS}, there exists a closed subset $Y$ of $H_+$, for which we can choose an $\varepsilon^*\in(0,\infty)$ such that
\begin{equation}
\label{hhs1a} 
\mathcal{W}(\theta,y)+h\in Y\;\;\;\mbox{for all}\;\;\;y\in Y,\;h\in B_H(0,\varepsilon^*),\;\theta\geq\bar{\theta}.
\end{equation}
We therefore assume that $\varepsilon$ (associated with the distribution $\nu^{\varepsilon}$ of $H_n$) belongs to $\left(0,\varepsilon^*\right]$.

The model $(Y_n)_{n\in\n_0}$ for an autoregulated gene, defined by \eqref{hhs1}-\eqref{hhs1a}, can be viewed as the Markov chain determined by the system \eqref{def:Y_n} with $I=\{1\}$, $S_1(t,y)=y$ (for all $t\geq 0$) and $w_{\theta}(y)=\mathcal{W}(\theta,y)$. Appealing to \eqref{p_eps}, we see that the transition law of this chain is given by
$$
\Pi(y,A)=\int_{B(0,\varepsilon)}\int_{\Theta} \mathbbm{1}_{A-\mathcal{W}(\theta,y)}(h)p(y,\theta)\,d\theta\,\nu^{\varepsilon}(dh)=\int_{\Theta} \nu^{\varepsilon}_{\mathcal{W}(\theta,y)}(A)p(y,\theta)\,\nu^{\varepsilon}(dh),
$$
where $\nu^{\varepsilon}_y(\cdot):=\nu^{\varepsilon}(\cdot-y)$ for $y\in Y$, which shows that the system obtained in this way in fact coincides with the one introduced in \cite{b:HHS}. Within the framework described in \cite{b:HHS}, one can show that $p$ satisfies condition \eqref{cond_p}, and that there exists a Borel measurable function $\Lambda: Y\times\Theta\to \mathbb{R}$ \hbox{such that}
\begin{equation}
\label{hhs2}
\norma{\mathcal{W}(\theta,y_1)-\mathcal{W}(\theta,y_2)}\leq \Lambda(y_1,\theta)\norma{y_1-y_2}\;\;\;\mbox{for all}\;\;\;y_1,y_2\in Y,\; \theta\in \Theta,
\end{equation}
and
\begin{equation}
\label{hhs3}
\int_{\Theta}\Lambda(y,\theta)p(y,\theta)\,d\theta\leq \gamma\;\;\;\mbox{for some}\;\;\; \gamma<1.
\end{equation}
From this, it follows immediately that assumptions \hyperref[cnd:A3]{(A3)} and \hyperref[cnd:A4]{(A4)}, introduced in \hbox{Section \ref{DSdef}}, are fulfilled. Clearly, the latter holds with $L_w=\gamma$. Moreover, we see that such a model trivially satisfies condition \hyperref[cnd:A1]{(A1)} (with any $y^*\in Y$) and \hyperref[cnd:A2]{(A2)} with $L=1$, $\alpha=0$ and $\mathcal{L}\equiv 0$. Since $L_w=\gamma<1$, we also obtain \eqref{ll}. In order to get \hyperref[cnd:A5]{(A5)}, we need to additionally assume that there exists $\delta_p>0$ such that, for any $y_1,y_2\in Y,$
\begin{equation}
\label{hhs4}
\int_{\Theta_{\gamma}(y_1)}\min\{p(y_1,\theta),p(y_2,\theta)\}\,d\theta\geq \delta_p,\;\;\;\mbox{where}\;\;\;\Theta_{\gamma}(y):=\{\theta\in\Theta:\,\Lambda(y,\theta)\leq \gamma\}.
\end{equation}
In summary, we can conclude that, whenever conditions \eqref{cond_p} and \eqref{hhs2}-\eqref{hhs4} are satisfied, the results of Section \ref{sec:main1} apply to  the abstract model $(Y_n)_{n\in\n_0}$ defined by \eqref{hhs1}-\eqref{hhs1b} (through identifying $(Y_n)_{n\in \n_0}$ with $(Y_n,1)_{n\in\n_0}$).

Comparing the system presented in Section \ref{section:example1} with the one described above, we see that the second one provides a much more detailed descripiton of the gene expression at time points just after the bursts (by taking into account the diffusion of the phosphorylated and dimerised form of the gene product), whilst the first one allows for modelling the process in continuous time. Interestingly, one can notice that the variables $\theta_n$ play completely different roles in these two models; namely, here, $\theta_n$ describes the burst times, while, in Section \ref{section:example1} it stands for the amount of the proteins produced in the bursts. It is also worth stressing explicitly that, since the model for an autoregulated gene evolves in some space of functions, we are not able to use techniques which are valid only in locally \hbox{compact spaces}.

\section{Proofs}\label{sec:proofs}
In this section, we provide the proofs of all our main results, gathered in Section \ref{sec: main_results}. Before we proceed to the analysis, let us go back to the definition of $\rho_c$, that is, the metric in $X$. As we have stressed in Section \ref{DSdef}, all the results work under the assumption that the \hbox{constant $c$,} appearing in \eqref{def:metric}, is sufficiently large. The choice of  $c$ depends on the parameters appearing in conditions  \hyperref[cnd:A1]{(A1)}-\hyperref[cnd:A3]{(A3)} as follows:
\begin{equation}\label{nc} c\geq  
\max\left\{\frac{1}{\lambda},\,e^{\sup T}\,, \frac{\lambda}{\lambda-\alpha}\right\}\frac{M_{\mathcal{L}}(\lambda-\alpha)}{\lambda L}+ \frac{2(\lambda- \alpha)}{L},  
\end{equation}
where $T\subset \left[0,\infty\right)$ is a fixed bounded set with positive measure such that
\begin{equation}\label{e:remT} e^{\alpha t}\leq \frac{\lambda}{\lambda-\alpha}\;\;\;\mbox{for all}\;\;\;t \in T, \end{equation}
and $M_{\mathcal{L}}:=\sup\{\mathcal{L}(y):\,y\in B_Y(y^*,R)\}$, where $R:=4b/(1-a)$ with 
\begin{equation}\label{def:b}
\begin{split}
a:&=(\lambda LL_w)(\lambda - \alpha)^{-1},\\
b:&= \lambda\max_{i\in I} \sup_{y\in Y}\int_0^{\infty} e^{-\lambda t} \int_{\Theta}\norma{ w_{\theta}(S_i(t,y^*))-y^*}p(S_i(t,y),\theta)\,d\theta\,dt+\varepsilon.
\end{split}
\end{equation}

\subsection{Proofs of the results from Section \ref{sec:main1}}
The proofs of the results referring to the discrete-time model $(Y_n,\xi_n)_{n\in\n_0}$, defined by \eqref{def:Y_n}, are based on two theorems concerning general Markov chains, which we formulate below.

Firstly, we shall quote \cite[Theorem 2.1]{b:kapica}, which relies heavily one the asymptotic coupling method, introduced in \cite{b:hairer} (cf. also \cite{b:woj,b:sleczka}).

For a given stochastic kernel \hbox{$P:E\times \mathcal{B}(E)\to\left[0,1\right]$}, a time-homogeneous Markov chain with values in $E^2$ (endowed with the product topology) is said to be a \emph{{Markovian} coupling} of $P$ whenever its transition law \hbox{$B:E^2\times \mathcal{B}(E^2)\to\left[0,1\right]$} \hbox{satisfies}
$$
B(x,y,A\times E)= P(x,A),\;\;\;B(x,y,E\times A)= P(y,A)\;\;\;\mbox{for}\;\;\;x,y\in E,\;A\in\mathcal{B}(E).
$$
Note that, if $Q: E^2\times\mathcal{B}(E^2)\to\left[0,1\right]$ is a substochastic kernel satisfying
\begin{equation} 
\label{qeq}
Q(x,y,A\times E)\leq P(x,A),\;\;\; Q(x,y,E\times A)\leq P(y,A),\;\;\;x,y\in E,\;A\in\mathcal{B}(E),
\end{equation}
then we can always construct a {Markovian} coupling of $P$ whose transition law $B$ satisfies $Q\leq B$. Indeed, it suffices to define the family $\{R(x,y,\cdot):\;x,y\in E\}$ of measures on $\mathcal{B}(E^2)$, which on rectangles $A\times B\in\mathcal{B}(E^2)$ are given by
$$ R(x,y,A\times B)=
\frac{(P(x,A)-Q(x,y,A\times E))(P(y,B)-Q(x,y,E\times B))}{1-Q(x,y,E^2)}$$
when $Q(x,y,E^2)<1$, and $R(x,y, A\times B)=0$ otherwise. It is then easy to see that \hbox{$B:=Q+R$} is a stochastic kernel satisfying $Q\leq B$, and that the Markov chain with transition function $B$ is a {Markovian} coupling of~$P$.

\begin{theorem}\label{ks-stab}
Let $(E,\rho)$ be a complete separable metric space. Suppose that we are given a regular Markov operator \hbox{$P:\mathcal{M}(E)\to \mathcal{M}(E)$} with the Feller property, and that there exists a substochastic kernel $Q$ on $E^2\times\mathcal{B}(E^2)$ satisfying \eqref{qeq}. Furthermore, assume that the following conditions hold:
\begin{itemize}
\item[(B1)]\phantomsection\label{cnd:B1}There exist a Lyapunov function $V:E\rightarrow\left[0,\infty\right)$ and constants $a\in(0,1)$ and $b>0$ satisfying
$$PV(x)\leq aV(x)+b\;\;\;\mbox{for}\;\;\;x\in E.$$
\item[(B2)]\phantomsection\label{cnd:B2}For some $F\in\mathcal{B}(E^2)$ and some $R>0$ the following conditions are satisfied:
\begin{itemize}
\item[\sbullet] $\supp Q(x,y,\cdot)\subset F$ for $(x,y)\in F$;
\item[\sbullet] There exists a {Markovian} coupling $(\Phi^1_n,\Phi^2_n)_{n\in\n_0}$ of $P$ with transition function $B$, {\hbox{satisfying}} $Q\leq B$, such that for
\begin{equation} \label{zb_k} K:=\{(x,y)\in F:\, V(x)+V(y)<R\} \end{equation}
and $\kappa:=\inf\{n\in\n:\,(\Phi^1_n,\Phi^2_n)\in K\}$ we can choose constants $\zeta\in (0,1)$ and $\bar{C}>0$ so that
{\begin{equation} \label{fastint} \ew_{(x,y)}(\zeta^{-\kappa})\leq \bar{C}\quad\mbox{whenever}\;\;V(x)+V(y)<\frac{4b}{1-a}.\end{equation}}
\end{itemize}
\item[(B3)]\phantomsection\label{cnd:B3}There exists a constant $q\in (0,1)$ such that
$$\int_{E^2}\rho(u,v)\, Q(x,y,du,dv)\leq q\rho(x,y)\;\;\;\mbox{for}\;\;\;(x,y)\in F.$$
\item[(B4)]\phantomsection\label{cnd:B4}Letting $U(r)=\{(x,y):\, \rho(x,y)\leq r\}$ for $r>0$, we have $$\inf_{(x,y)\in F}Q(x,y,U(q\rho(x,y)))>0.$$
\item[(B5)]\phantomsection\label{cnd:B5}There exist constants $l>0$ and $\nu\in\left(0,1\right]$ such that $$Q(x,y,E^2)\geq 1-l\rho(x,y)^{\nu} \;\;\; \mbox{for} \;\;\; (x,y)\in F.$$
\end{itemize}
Then, the operator $P$ possesses a unique invariant measure $\mu^{*}\in\mathcal{M}_1(E)$ such that \hbox{$\<V,\mu^*\><\infty$}. Moreover, there exist constants $C\in\mathbb{R}$ and \hbox{$\beta\in \left[0,1\right)$} such that
\begin{equation} \label{rate} d_{FM}(\mu P^n, \mu^{*})\leq C\beta^n(\<V,\,\mu+\mu^*\>+1) \end{equation}
for all $n\in\n$ and every {$\mu\in\mathcal{M}_1(E)$} satisfying $\<V,\mu\><\infty$. 
\end{theorem}
To explain a bit more the essential idea underlying the above result, we provide a brief sketch of its proof in the \textnormal{\hyperref[appendix]{Appendix}}. 

Secondly, we need a modified version of \cite[Theorem 2.1]{b:shir}. This result is originally stated for Markov chains evolving on a Hilbert space. However, a simple analysis of its proof shows that it can be easily reformulated to the following version, which remains valid in the case of Polish spaces.

\begin{theorem}\label{tw:slln-sh}
Let $(E,\rho)$ be a complete separable metric space, and let $(\Phi_n)_{n\in \n_0}$ be an \hbox{$E$-valued} time-homogeneous Markov chain with transition function $P$. Further, suppose that the following conditions hold:
\begin{itemize}
\item[(C1)]\phantomsection\label{cnd:C1} $P$ has a unique invariant measure $\mu^*\in \mathcal{M}_1(E)$.
\item[(C2)]\phantomsection\label{cnd:C2}  There exist a continuous function $\varphi:E\to\mathbb{R}_+$ and a sequence $(\gamma_k)_{k\in\n_0}$ of positive numbers satisfying $\sum_{k=0}^{\infty} \gamma_k<\infty$, such that for every $f\in Lip_b(E)$ we have 
$$|P^n f(x) - \<f,\mu^*\>|\leq \gamma_n\varphi(x)(\norma{f}_{\infty}+|f|_{Lip})\;\;\;\mbox{for}\;\;\;x\in E,\;n\in\n,$$
where $|f|_{Lip}$ is the minimal Lipschitz constant of $f$.
\item[(C3)]\phantomsection\label{cnd:C3}  there exists a continuous function $h:E\to\mathbb{R}_+$ such that
$$\ew_x\varphi(\Phi_n)\leq h(x)\;\;\;\mbox{for}\;\;\;n\in\n_0,\;x\in E.$$
\end{itemize}
Then for every $f\in Lip_b(E)$ and each initial state $x\in E$
$$\lim_{n\to\infty} \frac{1}{n}\sum_{k=0}^{n-1} f(\Phi_k)=\<f,\mu^*\>\;\;\;\pr_x - \mbox{a.s}.$$
\end{theorem}

\subsubsection{Proofs of Theorem \ref{tw:tempo} and Corollary \ref{col: powrot}} \label{sec:proof1}
The key idea to prove Theorem \ref{tw:tempo}, providing sufficient conditions for the exponential ergodicity of the discrete-time model, is  to verify the hypotheses of Theorem \ref{ks-stab} for the Markov operator $P_{\varepsilon}$, corresponding to \eqref{p_eps}, and an appropriately defined substochastic \hbox{kernel $Q$}.

In order to shorten some of the expressions, needed in the proof, we put
\begin{align}\label{shorts}
\begin{aligned}
&\mathbf{p\,}(x_1,x_2,t,\theta):=p(S_{i_1}(t,y_1),\theta) \wedge p(S_{i_2}(t,y_2),\theta),\\
&\bpi_j(x_1,x_2,t,\theta,h):=\pi_{i_1,j}(w_{\theta}(S_{i_1}(t,y_1))+h)\wedge \pi_{i_2,j}(w_{\theta}(S_{i_2}(t,y_1))+h),\\
&\mathbf{w}_j(x_1,x_2,t,\theta,h):=((w_{\theta}(S_{i_1}(t,y_1))+h,j),(w_{\theta}(S_{i_2}(t,y_2))+h,j)),
\end{aligned}
\end{align}
\noindent for $x_1=(y_1,i_1)$, $x_2=(y_2,i_2)\in X$, $t\geq 0$, $\theta\in \Theta$, $h\in B_H(0,\varepsilon)$, where $\wedge$ denotes minimum. Furthermore, introducing the notation
$$\Delta_h^t f(y,i):=\int_{\Theta} \Bigl[\,\sum_{j\in I} f(w_{\theta}(S_i(t,y))+h,j)\,\pi_{ij}(w_{\theta}(S_i(t,y))+h)\,\Bigr]p(S_i(t,y),\theta)\, d\theta$$
for any $(y,i)\in X$ and $f\in\overline{\mathcal{B}}(X)$, we can write
$$
P_{\varepsilon}f(y,i)=\int_{B_H(0,\varepsilon)}\int_0^{\infty} \lambda e^{-\lambda t}\, \Delta_h^tf(y,i) \, dt\,\nu^{\varepsilon}(dh).
$$

Let us now define
\begin{gather*}
\Lambda_h^t f(x_1,x_2):=\int_{\Theta} \Bigl[\,\sum_{j\in I} f(\mathbf{w}_j(x_1,x_2,t,\theta,h))\,\bpi_j(x_1,x_2,t,\theta,h)\,\Bigr]\mathbf{p\,}(x_1,x_2,t,\theta)\, d\theta,\\
\Lambda_h^t (x_1,x_2,A):=\Lambda_h^t \mathbbm{1}_A(x_1,x_2)\;\;\;\mbox{for}\;\;\; (x_1,x_2)\in X^2,\;f\in \overline{B}_b(X^2),\;A\in\B,
\end{gather*}
and $Q_{\varepsilon}:X^2\times \mathcal{B}(X^2)\to\left[0,1\right]$ by setting
\begin{equation}\label{def-q}
Q_{\varepsilon}(x_1,x_2,A):=\int_{B_H(0,\varepsilon)}\int_0^{\infty} \lambda e^{-\lambda t}\, \Lambda_h^t((x_1,x_2),A)\, dt\,\nu^{\varepsilon}(dh)
\end{equation}
for all $(x_1,x_2)\in X^2$ and $A\in\mathcal{B}(X^2)$. It is easily seen that $Q_{\varepsilon}$ is a~substochastic kernel, satisfying \eqref{qeq} with $P=P_{\varepsilon}$, and obviously
\begin{align*}
Q_{\varepsilon}f(x_1,x_2)=\int_{B_H(0,\varepsilon)}\int_0^{\infty} \lambda e^{-\lambda t}\, \Lambda_h^tf(x_1,x_2)\, dt\,\nu^{\varepsilon}(dh)
\end{align*}
for any $x_1,x_2\in X$ and $f\in\overline{B}_b(X^2)$.

In the analysis that follows, we assume that $X^2$ is equipped with the following metric 
\begin{equation}\label{def:matricX2}
\overline{\rho}_c ((x_1,x_2),(z_1,z_2))=\rho_c(x_1,z_1)+\rho_c(x_2,z_2),\;\;\;(x_1,x_2),(z_1,z_2)\in X^2.
\end{equation}

\begin{proof}[Proof of Theorem \ref{tw:tempo}]
It suffices to verify the hypotheses of Theorem \ref{ks-stab} for $P=P_{\varepsilon}$ and $Q=Q_{\varepsilon}$ given by \eqref{def-q}. 
First of all, let us note that $P_{\varepsilon}$ is Feller, which follows immediately from the continuity of functions  $\pi_{i,j}$, $y\mapsto S_i(t,y)$, $y\mapsto p(y,\theta)$ and $w_{\theta}$. 
Moreover, take $x^*:=(y^*,i^*)$, where $y^*$ is determined by \hyperref[cnd:A1]{(A1)}, and $i^*$ is an arbitrarily fixed element in $I$. Our further reasoning falls naturally into five parts.

{\bf Step 1. } Our first goal is to show that condition \hyperref[cnd:B1]{(B1)} holds for \hbox{$V:X\to\left[0,\infty\right)$} given~by
\begin{equation}\label{defV} V(y,j):=\norma{y-y^*}\;\;\;\mbox{for}\;\;\; (y,j)\in X,\end{equation}
with constants $a$ and $b$ determined by \eqref{def:b}. Clearly, $V$ is a Lyapunov function, which satisfies $V(x)\leq \rho_c(x^*,x)$ for all $x\in X$. Further, note that $a\in (0,1)$, due to \eqref{ll}. For brevity, let us define
$$b_j(t,y):=\int_{\Theta}\norma{w_{\theta}(S_j(t,y^*))-y^*}p(S_j(t,y),\theta)\,d\theta,\;\; j\in I,\,t\geq 0,\,z\in Y.$$
From \hyperref[cnd:A1]{(A1)} we know that $b<\infty$, which, in particular, implies that $b_j(t,y)<\infty$ for almost all $t\geq 0$ and each $y\in Y$.

Let $(y,i)\in X$. By conditions \hyperref[cnd:A3]{(A3)} and \hyperref[cnd:A2]{(A2)} we see that for $t\geq 0$ and~\hbox{$h\in B_H(0,\varepsilon)$},
\begin{align*}
\Delta_h^t V(y,i&)=\int_{\Theta}  \norma{w_{\theta}(S_i(t,y))+h-y^*}\,\Bigl[\,\sum_{j\in I}\pi_{ij}(w_{\theta}(S_i(t,y))+h)\,\Bigr]p(S_i(t,y),\theta)\, d\theta\\
&\leq \int_{\Theta} \norma{w_{\theta}(S_i(t,y))-w_{\theta}(S_i(t,y^*))}\,p(S_i(t,y),\theta)\, d\theta\\
&\quad + \int_{\Theta} \norma{w_{\theta}(S_i(t,y^*))-y^*}\,p(S_i(t,y),\theta)\, d\theta+\norma{h}\\
&\leq L_w\norma{S_i(t,y)-S_i(t,y^*)}+b_i(t,y)+\varepsilon\leq LL_w e^{\alpha t}\norma{y-y^*}+b_i(t,y)+\varepsilon.
\end{align*}
Hence,
\begin{align}\label{eq:<aV+b}
\begin{aligned}
P_{\varepsilon} V(y,i)&=\int_{B_H(0,\varepsilon)}\int_0^{\infty} \lambda e^{-\lambda t}\, \Delta_h^t V(y,i) \, dt\,\nu^{\varepsilon}(dh)\\
& \leq \lambda  L L_w \int_0^{\infty} e^{(\alpha-\lambda)t}\,dt\norma{y - y^*}+\lambda \int_0^{\infty} e^{-\lambda t} b_i(t,y)\,dt+\varepsilon\\
&\leq \frac{\lambda L L_w }{\lambda-\alpha}\norma{y-y^*}+b=a V(y,i)+b.
\end{aligned}
\end{align}

{\bf Step 2. } Let us define $R:=4b/1(1-a)$ and $F:=F_1\cup F_2,$
where $F_1, F_2\subset X^2$ are given by
$$F_1:=\{((y_1,i_1),(y_2,i_2)):\,i_1=i_2\}, \;\;\;F_2:=\{((y_1,i_1),(y_2,i_2)):\,V(y_1,i_1)+V(y_2,i_2)<R\}.$$
We will show that condition \hyperref[cnd:B2]{(B2)} is satisfied for them.

First of all, observe that $\supp Q_{\varepsilon}(x_1,x_2,\cdot)\subset F$ for every $(x_1,x_2)\in X^2$. To see this, let 
$$(x_1,x_2):=((y_1,i_1),(y_2,i_2))\in X^2\;\;\;\mbox{and}\;\;\;(z_1,z_2):=((u_1,k_1),(u_2,k_2))\in X^2\backslash F.$$ 
Then, in particular, $(z_1,z_2)\notin F_1$, that is $k_1\neq k_2$.
Consequently, we obtain 
$$\overline{\rho}_c(\mathbf{w}_j(x_1,x_2,t,\theta,h),(z_1,z_2))\geq c(\disc(j,k_1)+\disc(j,k_2))\geq c\;\;\;\mbox{for}\;\;\; j\in I,\; t\geq 0,\; h\in B_H(0,\varepsilon),$$
where $\mathbf{w}_j$ and $\overline{\rho}_c$ were introduced in \eqref{shorts} \eqref{def:matricX2}, respectively.
Hence, taking \hbox{$\eta\in (0,c)$}, we see that $\mathbf{w}_j(x_1,x_2,t,\theta,h)\notin B_{X^2}((z_1,z_2),\eta)$ for all $j,t,h$, which yields  $Q_{\varepsilon}(x_1,x_2,B_{X^2}((z_1,z_2),\eta))=0$ and thus $(z_1,z_2)\in X^2\backslash \supp Q_{\varepsilon}(x_1,x_2,\cdot)$.

Let $(X^1_n,X^2_n)_{n\geq 0}$ be an arbitrary {Markovian} {coupling of $P_{\varepsilon}$} with transition function $B$ such that $Q_{\varepsilon}\leq B$. Further, put $\kappa:=\inf\{n\in\n:\,(X^1_n,X^2_n)\in K\}$, where $K\subset X^2$ is given by \eqref{zb_k}, and define $\overline{V}:X^2\to \left[0,\infty\right)$ by
$$\overline{V}(x_1,x_2):=V(x_1)+V(x_2)\;\;\;\mbox{for}\;\;\;(x_1,x_2)\in X^2.$$
Since $\{(x_1,x_2)\in X^2:\; \overline{V}(x_1,x_2)<R\}= F_2\subset F$, we obtain
$$\kappa=\inf\left\{n\in \n_0:\, \overline{V}(X^1_n,X^2_n)<R\right\}.$$
Furthermore,  $\overline{V}$ is a Lyapunov function satisfying
$$B\overline{V}(x_1,x_2)\leq a \overline{V}(x_1,x_2)+2b\;\;\;\mbox{for}\;\;\;(x_1,x_2)\in X^2,$$
which follows from \eqref{eq:<aV+b}. 
From \cite[Lemma 2.2]{b:kapica} it now follows that \eqref{fastint} holds. 

{\bf Step 3. }We shall prove that condition \hyperref[cnd:B3]{(B3)} holds with $q:=a\in(0,1)$. Let $(x_1,x_2):=((y_1,i_1),(y_2,i_2))\in F$. By condition \hyperref[cnd:A3]{(A3)} we obtain that, for $t\geq 0$ and \hbox{$h\in B_H(0,\varepsilon)$},
\begin{align} \label{war:A2b}
\begin{split}
\Lambda_h^t \rho_c (x_1,x_2)
&\leq \int_{\Theta} \norma{w_{\theta}(S_{i_1}(t,y_1))-w_{\theta}(S_{i_2}(t,y_2))} p(S_{i_1}(t,y_1),\theta)\,d\theta\\
&\leq L_w\norma{S_{i_1}(t,y_1)-S_{i_2}(t,y_2)}. 
\end{split}
\end{align}
In the case where $i_1\neq i_2$, that is $(x_1,x_2)\in F_2$, it follows that $y_2\in B_Y(y^*,R)$, and thus $\mathcal{L}(y_2)\leq M_{\mathcal{L}}$.
This, combined with \hyperref[cnd:A2]{(A2)}, gives
\begin{equation} \label{war:A3b}
\norma{S_{i_1}(t,y_1)-S_{i_2}(t,y_2)}\leq L e^{\alpha t}\norma{y_1-y_2}+t M_{\mathcal{L}}\,\disc(i_1,i_2).
\end{equation}
Finally, applying \eqref{war:A2b}, \eqref{war:A3b} and \eqref{nc}, we have
\begin{align*}
Q_{\varepsilon}\rho_c (x_1,x_2)
&\leq \lambda  L_w \int_0^{\infty} e^{-\lambda t} \norma{S_{i_1}(t,y_1)-S_{i_2}(t,y_2)}\,dt\\ 
& \leq \lambda LL_w \left (\int_0^{\infty} e^{-\lambda t}\,dt\norma{y_1-y_2}+ \frac{M_{\mathcal{L}}}{L}\int_0^{\infty} te^{-\lambda t} \,dt \,\disc(i_1,i_2) \right)
\\
&\leq \frac{\lambda L L_w }{\lambda-\alpha}\left(\norma{y_1-y_2}+\frac{(\lambda-\alpha)M_{\mathcal{L}}}{\lambda^2 L}\disc(i_1,i_2)\right)\leq q \rho_c(x_1,x_2).
\end{align*}

{\bf Step 4. }We now proceed to prove condition \hyperref[cnd:B4]{(B4)}. For this purpose, let \hbox{$T\subset\left[0,\infty\right)$} be the bounded set with positive measure such that \eqref{e:remT} holds. Clearly, due to \eqref{e:remT} we obtain
\begin{equation}\label{e:llw} LL_w e^{\alpha t} \leq q\;\;\;\mbox{for}\;\;\;t\in T\;\;\;\mbox{(where}\;\;\;q=\lambda LL_w(\lambda - \alpha)^{-1}\,). \end{equation}
Define ${\delta=\delta_{\pi}\delta_p\int_T \lambda e^{-\lambda t}\,dt}$. Letting $(x_1,x_2):=((y_1,i_1),(y_2,i_2))\in F$ and $$U:=\{(u_1,u_2)\in X^2:\, \rho_c(u_1,u_2)\leq q\rho_c(x_1,x_2)\},$$
we shall establish that $Q_{\varepsilon}(x_1,x_2,U)\geq \delta.$

Recall the definition of $\Theta(\cdot,\cdot)$ introduced in \eqref{defit} and consider the following sets:
{$$\mathcal{R}_1(t):=\Theta(S_{i_1}(t,y_1),S_{i_2}(t,y_2)),$$
$$\mathcal{R}_2(t):=\{\theta\in \Theta:\, \norma{w_{\theta}(S_{i_1}(t,y_1)) - w_{\theta}(S_{i_2}(t,y_2)}\leq q \rho_c(x_1,x_2) \}.$$}
Now, applying \eqref{war:A3b}, \eqref{e:llw}  and \eqref{nc}, we see that, for $t\in T$ and $\theta \in \mathcal{R}_1(t)$,
\begin{align*}
\norma{w_{\theta}(S_{i_1}(t,y_1))-w_{\theta}(S_{i_2}(t,y_2))} &\leq L_w \norma{S_{i_1}(t,y_1)-S_{i_2}(t,y_2)}\\
&\leq L_w L e^{\alpha t}\norma{y_1-y_2}+ L_we^{\sup T}M_{\mathcal{L}}\,\disc(i_1,i_2)\\
&\leq q\norma{y_1-y_2}+cq\disc(i_1,i_2)=q\rho_c(x_1,x_2).
\end{align*}
This obviously implies that $\mathcal{R}_1(t)\subset \mathcal{R}_2(t)$ for any $t\in T$. Furthermore, appealing to the notation
introduced in \eqref{shorts}, for any $t\in T$, $\theta \in \Theta$,  $h\in B_H(0,\varepsilon)$ and $j\in I$ we can write 
$\mathcal{R}_2(t)=\{\theta\in\Theta:\, \mathbf{w}_j(x_1,x_2,t,\theta,h)\in U \},$ whence
$$\mathbbm{1}_U(\mathbf{w}_j(x_1,x_2,t,\theta,h))=\mathbbm{1}_{\mathcal{R}_2(t)}(\theta)\geq \mathbbm{1}_{\mathcal{R}_1(t)}(\theta).$$
From \hyperref[cnd:A5]{(A5)} it then follows that, for $t\in T$ and $h\in B_H(0,\varepsilon)$,
\begin{align*}
\Lambda_h^t(x_1,x_2,U)\geq \delta_{\pi} \int_{\mathcal{R}_1(t)} p(S_{i_1}(t,y_1),\theta)) \wedge p(S_{i_2}(t,y_2),\theta)\, d\theta\geq \delta_{\pi}\delta_p,
\end{align*}
which finally gives
\begin{align*}
Q_{\varepsilon}(x_1,x_2,U)\geq \int_{B_H(0,\varepsilon)}\int_T \lambda e^{-\lambda t}\, \Lambda_h^t(x_1,x_2,U)\, dt\,\nu^{\varepsilon}(dh)\geq \delta_{\pi}\delta_p \int_T\lambda e^{-\lambda t}\,dt=\delta.
\end{align*}

{\bf Step 5. }To complete the proof, it remains to establish condition \hyperref[cnd:B5]{(B5)}. Let $(x_1,x_2):=((y_1,i_1),(y_2,i_2))\in F$, and put $z_1(t):=S_{i_1}(t,y_1)$, \hbox{$z_2(t):=S_{i_2}(t,y_2)$}. 
Applying the inequality
$$(s_1\wedge s_2)(t_1\wedge t_2)\geq s_1t_1-s_1|t_1-t_2|-|s_1-s_2|t_1,\;\;s_i,t_i\in\mathbb{R},\,i=1,2,$$
and setting
$$A_h^t(x_1,x_2)=\int_{\Theta}\Bigl[\sum_{j\in I} \pi_{i_1 j}(w_{\theta}(z_1(t))+h)\Bigr] p(z_1(t),\theta) \,d\theta,$$
$$B_h^t(x_1,x_2)=\int_{\Theta}\Bigl[\sum_{j\in I} \pi_{i_1 j}(w_{\theta}(z_1(t))+h) 
\Bigr]|p(z_1(t),\theta)- p(z_2(t),\theta)| \,d\theta,$$
$$
C_h^t(x_1,x_2)=\int_{\Theta}\Bigl[\sum_{j\in I} |\pi_{i_1 j}(w_{\theta}(z_1(t))+h)-\pi_{i_2 j}(w_{\theta}(z_2(t))+h)|\Bigr]p(z_1(t),\theta) \,d\theta,
$$
we infer {(recalling the notation introduced in \eqref{shorts})} that
\begin{align*}
\Lambda_h^t (x_1,x_2,X^2)&=\sum_{j\in I}\int_{\Theta}
\bpi_j(x_1,x_2,t,\theta,h) \mathbf{p\,}(x_1,x_2,t,\theta)d\theta \\
&\geq A_h^t(x_1,x_2)-B_h^t(x_1,x_2)-C_h^t(x_1,x_2)
\;\;\;\mbox{for}\;\;\;t\geq 0,\, h\in B_H(0,\varepsilon).
\end{align*}
Clearly, $A_h^t(x_1,x_2)=1$. Further, due to \hyperref[cnd:A4]{(A4)}, we have
\begin{align*}
B_h^t(x_1,x_2)=\int_{\Theta}|p(z_1(t),\theta)- p(z_2(t),\theta)| \,d\theta\leq L_p\norma{z_1(t)-z_2(t)}.
\end{align*}
Finally, using \hyperref[cnd:A4]{(A4)} and \hyperref[cnd:A3]{(A3)}, sequentially, we observe that
\begin{align*}
C_h^t(x_1,x_2)&\leq \int_{\Theta}\Bigl[\sum_{j\in I} |\pi_{i_1 j}(w_{\theta}(z_1(t))+h)-\pi_{i_1 j}(w_{\theta}(z_2(t))+h)|\Bigr]p(z_1(t),\theta)\,d\theta \\
&\quad+\Bigl\{\int_{\Theta} \Bigl[\sum_{j\in I} \pi_{i_1 j}(w_{\theta}(z_2(t))+h)\,p(z_1(t),\theta)\Bigr]\,d\theta\\
&\quad+\int_{\Theta} \Bigl[\sum_{j\in I} \pi_{i_2 j}(w_{\theta}(z_2(t))+h)\, p(z_1(t),\theta)\Bigr]\,d\theta\Bigr\}\disc(i_1,i_2)\\
&\leq L_{\pi} \int_{\Theta}\norma{w_{\theta}(z_1(t))-w_{\theta}(z_2(t))}p(z_1(t),\theta)\,d \theta+2\disc(i_1,i_2)\\
&\leq L_wL_{\pi}\norma{z_1(t)-z_2(t)}+2\disc(i_1,i_2).
\end{align*}
Hence
$$\Lambda_h^t (x_1,x_2,X^2)\geq 1-(L_p+L_wL_{\pi})\norma{S_{i_1}(t,y_1)-S_{i_2}(t,y_2)}-2\disc(i_1,i_2).$$
Now, reconsidering \eqref{war:A3b} we conclude that
\begin{align*} 
\Lambda_h^t (x_1,x_2,X^2) &\geq 1-(L_p+L_wL_{\pi})\left[\,Le^{\alpha t}\norma{y_1-y_2}+tM_{\mathcal{L}}\,\disc(i_1,i_2)\,\right]-2\disc(i_1,i_2)\\
&\geq 1-L(L_p+L_wL_{\pi}+1)\Bigl[e^{\alpha t}\norma{y_1-y_2}+\frac{1}{L}(tM_{\mathcal{L}}+2)\disc(i_1,i_2)\Bigr]
\end{align*}
for all $t\geq 0$ and $h\in B_H(0,\varepsilon)$. 
From \eqref{nc} it now follows that
\begin{align*}
&Q_{\varepsilon}(x_1,x_2,X^2)=\int_{B_H(0,\varepsilon)}\int_0^{\infty} \lambda e^{-\lambda t}\, \Lambda_h^t(x_1,x_2,X^2)\, dt\,\nu^{\varepsilon}(dh)\\
&\geq 1-\lambda L(L_p+L_wL_{\pi}+1)\left[\int_0^{\infty} e^{(\alpha-\lambda)t}dt\norma{y_1-y_2}+\frac{1}{L}\Bigl(M_{\mathcal{L}}\int_0^{\infty} te^{\lambda t}dt+2\Bigr) \disc(i_1,i_2) \right]\\
&=1-\frac{\lambda L(L_p+L_wL_{\pi}+1)}{\lambda-\alpha}\,\left[\norma{y_1-y_2}+\frac{\lambda-\alpha}{L} \left(\frac{M_{\mathcal{L}}}{\lambda^2}+2\right)\disc(i_1,i_2)\right]\\
&\geq 1-\frac{\lambda L(L_p+L_wL_{\pi}+1)}{\lambda-\alpha}\rho_c(x_1,x_2).
\end{align*}

Summarizing, we have shown that all the hypotheses of Theorem \ref{ks-stab} hold, and thus the proof is now complete.
\end{proof}

\begin{proof}[Proof of Corollary \ref{col: powrot}]
In what follows, for each $n\in\n_0$, we write $\mu_n$ and $\widetilde{\mu}_n$ for the distributions of $(Y_n,\xi_n)$ and $Y_n$, respectively.

In order to show \hyperref[cnd:1_cor]{(1)}, let $\widetilde{\mu}_0=\widetilde{\mu}^{*}$, and choose $\pi_i: Y\to\left[0,\infty\right)$ so that
$$\mu^*(B\times\{i\})=\int_B \pi_i(y)\,\widetilde{\mu}^{*}(dy),\;\; B\in \mathcal{B}(Y),\,i\in I.$$
Since $\sum_{i\in I} \pi_i=1$ almost everywhere with respect to $\widetilde{\mu}_0$, the conditional distribution
$$\pr(\xi_0=i|Y_0=y):=\pi_i(y)\;\;\;\mbox{for}\;\;\;y\in Y,\,i\in I,$$
is well-defined. It follows that
\begin{align*}
\mu_0(B\times J)&=\pr(Y_0\in B,\, \xi_0\in J)=\sum_{j\in J} \int_{\{Y_0\in B\}} \pr(\xi_0=j\,|\,Y_0)\,d\pr\\
&=\sum_{j\in J}\int_B \pi_j(y)\,\widetilde{\mu}^{*}(dy)=\sum_{j\in J}\mu^*(B\times\{j\})=\mu^*(B\times J)
\end{align*}
for all $B\in\mathcal{B}(Y)$ and $J\subset I$, which gives $\mu_0=\mu^*$. Consequently, we obtain for any $B\in\mathcal{B}(Y)$,
$$\widetilde{\mu}_n(B)=\mu_n(B\times I)=\mu_0 P^n (B\times I)=\mu^* P^n (B\times I)=\mu^*(B\times I)=\widetilde{\mu}^*(B).$$

For the proof of assertion \hyperref[cnd:2_cor]{(2)}, it suffices to observe that 
$$d_{FM}(\widetilde{\mu}_n,\widetilde{\mu}^*)\leq d_{FM}(\mu_n,\mu^*)\;\;\;\mbox{for}\;\;\;n\in\n.$$ It follows from the fact that for every $f\in \mathcal{R}_{FM}(Y)$,
\begin{align*}
\left|\<f,\widetilde{\mu}_n-\widetilde{\mu}^*\>\right|&=\left|\int_{Y} f(y)\,(\widetilde{\mu}_n-\widetilde{\mu}^*)(dy)\right|=\left|\int_{Y\times I} f(y)\,(\mu_n-\mu^*)(dy,di)\right|\leq d_{FM}(\mu_n,\mu^*),
\end{align*}
where the last inequality holds since $\left[(y,i)\mapsto f(y)\right]\in \mathcal{R}_{FM}(Y\times I)$. Then \hyperref[cnd:2_cor]{(2)} is ensured \hbox{by \eqref{spec_gap}}.
\end{proof}

\subsubsection{Proof of Theorem \ref{tw:slln-dyskr}}
To establish the SLLN for the discrete-time model, we use both Theorem \ref{tw:tempo}, which has already been proven, and Theorem \ref{tw:slln-sh} (i.e. a version of the SLLN by Shirikyan). 

\begin{proof}[Proof of Theorem \ref{tw:slln-dyskr}] \label{sec:proof2}
It suffices to verify the conditions of Theorem \ref{tw:slln-sh}. By virtue of Theorem~\ref{tw:tempo} there exists a unique invariant distribution $\mu^*$ for $P$, whence \hyperref[cnd:C1]{(C1)} holds, and we can choose  $\bar{C
}\in\mathbb{R}$ and $\beta\in\left[0,1\right)$ such that 
$$d_{FM}(\delta_{(y,i)} P^n, \mu^*)\leq \bar{C}\beta^n(\norma{y-y^*}+1)\;\;\;\mbox{for any}\;\;\;(y,i)\in X,$$
where $\delta_{(y,i)}$ stands for the Dirack measure at $(y,i)$. Let us now define $\varphi(y,i):=V(y,i)+1$ for any $(y,i)\in X$, where $V$ is given by \eqref{defV}. Then
$$|P_{\varepsilon}^n f(y,i)- \<f,\mu^*\>|\leq C\beta^n\varphi(y,i)(\norma{f}_{\infty}+|f|_{Lip})\;\;\;\mbox{for all}\;\;\;(y,i)\in X\;\;\mbox{and}\;\;f\in Lip_b(X),$$
which ensures \hyperref[cnd:C2]{(C2)}. As we have seen in \eqref{eq:<aV+b}, there exists $a\in (0,1)$ and $b>0$ such that 
$P_{\varepsilon}V(y,i)\leq a V(y,i)+b$ for all $(y,i)\in X.$ This gives
$$P^n_{\varepsilon}V(y,i)\leq a^n V(y,i)+b\sum_{k=0}^{n-1} a^k\leq V(y,i)+\frac{b}{1-a}\;\;\;\mbox{for}\;\;\;(y,i)\in X,\;n\in\n,$$
and thus we can conclude that
$$\ew_{(y,i)} \varphi(X_n)=P^n_{\varepsilon} \varphi(y,i)=P^n_{\varepsilon} V(y,i)+1\leq V(y,i)+1+\frac{b}{1-a}\;\;\;\mbox{for}\;\;\;(y,i)\in X.$$
Hence, \hyperref[cnd:C3]{(C3)} holds with $h(y,i):=V(y,i)+1+b(1-a)^{-1}$, $(y,i)\in X,$ which completes the proof.
\end{proof}

\subsection{Proofs of the results from Section \ref{sec:cont}}  \label{sec:proofs2}
We now turn to proving the statements regarding the PDMP defined by \eqref{zz}, which have been gathered in Section \ref{sec:cont}.

\subsubsection{Proof of Theorem \ref{tw:rel}} \label{sec:proof3}

Our proof of Theorem \ref{tw:rel} is based on techniques similar to those employed in \cite[Theorem 5.3.1]{b:horbacz_diss} and\hbox{\cite[Propositions 2.1 and 2.4]{b:benaim1}.}

Let us first recall that $(\bar{P}_{\varepsilon}^t)_{t\geq 0}$ stands for the transition semigroup of $(\overline{Y}(t),\overline{\xi}(t))_{t\geq 0}$, and that $(\Theta, \mathcal{B}(\Theta), \vartheta)$ is assumed to be a  measure space with a finite \hbox{Borel measure $\vartheta$.}

We will begin the analysis by showing that $(\bar{P}_{\varepsilon}^t)_{t\geq 0}$ enjoys the Feller property. Further, we shall provide an approximation of $t^{-1}\bar{P}_{\varepsilon}^t$, which will allow us to establish a relationship between  the weak infinitesimal operators corresponding to the semigroup $(\bar{P}_{\varepsilon}^t)_{t\geq 0}$ and some other semigroup of linear operators, which can be expressed explicitly in a simple form. This observation will be crucial in proving the theorem.

For brevity, we shall write $\bj{n}$, $\bs{n}$, $\bt{n}$ and $\bh{n}$ for the sequences $(j_1,\ldots,j_n)$, $(s_1,\ldots,s_n)$, $(\theta_1,\ldots,\theta_n)$ and $(h_1,\ldots,h_n)$, respectively. The symbols $d \bt{n}$ and $\nu_n^{\varepsilon}(d\,\bh{n})$ will be used to denote the product measures $\vartheta^{\otimes n}(d\theta_1,\ldots,d\theta_n)$ and  $(\nu^{\varepsilon})^{\otimes n}(dh_1,\ldots,dh_n)$, respectively. Moreover, for any \hbox{$n\in\n$}, $y\in Y$, $i\in I$, $\bj{n}\in I^n$, $\bs{n+1}\in \left[0,\infty\right)^{n+1}$, $\bt{n}\in\Theta^n$ and \hbox{$\bh{n}\in B_H(0,\varepsilon)^n,$} we define
\vspace*{-0.1cm}
\begin{gather*}
\ww_1(y,i,s_1,\,\theta_1,h_1)=w_{\theta_1}(S_i(s_1,y))+h_1,\\
\begin{align*}
\ww_n&(y,i,\bj{n-1},\bs{n},\bt{n},\bh{n})=w_{\theta_n}(S_{j_{n-1}}(s_n,
\ww_{n-1}(y,i,\bj{n-2},\bs{n-1},\bt{n-1},\bh{n-1})))+h_n;
\end{align*}
\end{gather*}
\begin{gather*}
\hh_1(y,i,j_1,s_1,s_2,\theta_1,h_1)=
S_{j_1}(s_2,\ww_1(y,i,s_1,\theta_1,h_1)),\\
\hh_n(y,i,\bj{n},\bs{n+1},\bt{n},\bh{n})=S_{j_n}(s_{n+1}, \ww_n(y,i,\bj{n-1},\bs{n},\bt{n},\bh{n}));
\end{gather*}
\begin{gather*}
\Pi_1(y,i,j_1,s_1,\theta_1,h_1)=
\pi_{ij_1}(\ww_1(y,i,s_1,\theta_1,h_1)),\\
\begin{align*}
\Pi_n(y,i,\bj{n},\bs{n},\bt{n},\bh{n})
&=\Pi_{n-1}(y,i,\bj{n-1},\bs{n-1},\bt{n-1},\bh{n-1})\\
&\quad\times\pi_{j_{n-1}j_n}(\ww_n(y,i,\bj{n-1},\bs{n},\bt{n},\bh{n})).
\end{align*}
\end{gather*}
\begin{gather*}
\pp_1(y,i,s_1,\theta_1)=
p(S_i(s_1,y),{\theta_1}),\\
\begin{align*}
\pp_n(y,i,\bj{n-1},\bs{n},\bt{n},\bh{n-1})&=\pp_{n-1}(y,i,\bj{n-2},\bs{n-1},\bt{n-1},\bh{n-2})\\
&\quad\times p(\hh_{n-1}(y,i,{\bj{n-1},\bs{n},\bt{n-1},\bh{n-1}}),\theta_n).
\end{align*}
\end{gather*}
Furthermore, we let $(N_t)_{t\geq 0}$ denote the renewal counting process with arrival times $\tau_n$, i.e.
$$N_t:=\max\{n\in\n_0: \tau_n\leq t\}, \;\;\;t\geq 0.$$

\begin{lemma}\label{lem:lip}
The semigroup $(\bar{P}_{\varepsilon}^t)_{t\geq 0}$ is Feller.
\end{lemma}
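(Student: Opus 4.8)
The plan is to reduce the claim to continuity in the spatial variable and then to establish it from an explicit series expansion of the semigroup. Since $I$ carries the discrete metric \eqref{dm}, the space $X=Y\times I$ splits into $N$ clopen copies of $Y$, so a sequence $(y_m,i_m)\to(y_0,i_0)$ in $(X,\rho_c)$ forces $i_m=i_0$ for large $m$ and $y_m\to y_0$ in $Y$. Consequently it suffices to prove, for each fixed $t\geq 0$, $i\in I$ and $f\in C_b(X)$, that $y\mapsto P^t_{\varepsilon}f(y,i)$ is bounded and continuous.

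First I would condition the expectation \eqref{pt_eps} on the number of jumps $N_t=n$ up to time $t$, together with the flow indices $\xi_1=j_1,\dots,\xi_n=j_n$, the marks $\theta_1,\dots,\theta_n$, the perturbations $H_1,\dots,H_n$ and the increments $\Delta\tau_1=s_1,\dots,\Delta\tau_n=s_n$. Writing $s_{n+1}:=t-(s_1+\dots+s_n)$ for the time elapsed since the last jump, the interpolation \eqref{zz} gives $\overline{Y}(t)=\hh_n(\cdot)$ and $\overline{\xi}(t)=j_n$ on this event, while the conditional-independence structure of the model weights the configuration by $\Pi_n$ (flow transitions) and $\pp_n\,d\bt{n}$ (marks), against $\nu_n^{\varepsilon}(d\bh{n})$. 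Because the increments are i.i.d.\ exponential with parameter $\lambda$, the density $\lambda^n e^{-\lambda(s_1+\dots+s_n)}$ of the first $n$ increments multiplies the survival factor $e^{-\lambda s_{n+1}}$, and the two combine to the constant $\lambda^n e^{-\lambda t}$ on the simplex $\Sigma_n(t):=\{\bs{n}\in(0,\infty)^n:\,s_1+\dots+s_n\leq t\}$. This yields
$$P^t_{\varepsilon}f(y,i)=e^{-\lambda t}\sum_{n=0}^{\infty}\lambda^n\sum_{\bj{n}\in I^n}\int_{\Sigma_n(t)}\int_{\Theta^n}\int_{B_H(0,\varepsilon)^n} f(\hh_n,j_n)\,\Pi_n\,\pp_n\,\nu_n^{\varepsilon}(d\bh{n})\,d\bt{n}\,d\bs{n},$$
with the convention that the $n=0$ term equals $e^{-\lambda t}f(S_i(t,y),i)$, and where the arguments of $\hh_n,\Pi_n,\pp_n$ are taken to be $(y,i,\bj{n},\bs{n+1},\bt{n},\bh{n})$ as appropriate (only $\hh_n$ sees the determined $s_{n+1}$).

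The next step is to bound the series uniformly in $(y,i)$. Using $\sum_{j\in I}\pi_{ij}\equiv 1$ one checks, by summing over $j_n,j_{n-1},\dots$ in turn, that $\sum_{\bj{n}\in I^n}\Pi_n=1$; similarly $\int_{\Theta^n}\pp_n\,d\bt{n}=1$ follows by integrating out $\theta_n,\dots,\theta_1$ successively and using $\int_{\Theta}p(\cdot,\theta)\,d\theta=1$ (each evaluation point of $p(\cdot,\theta_k)$ being independent of $\theta_k$). Since $\nu_n^{\varepsilon}$ is a probability measure and $\int_{\Sigma_n(t)}d\bs{n}=t^n/n!$, the $n$-th summand is bounded in modulus by $e^{-\lambda t}\norma{f}_{\infty}(\lambda t)^n/n!$, uniformly in $(y,i)$. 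The Weierstrass $M$-test then gives uniform convergence of the series on $X$ (and $\norma{P^t_{\varepsilon}f}_{\infty}\leq\norma{f}_{\infty}$), so it remains only to show each summand is continuous in $y$. Fixing $n$ and $\bj{n}$ and letting $y_m\to y_0$, the integrand $f(\hh_n,j_n)\Pi_n\pp_n$ converges pointwise, because $w_{\theta}$, $S_i(t,\cdot)$, $\pi_{ij}$, $p$ and $f$ are continuous and $\hh_n,\Pi_n,\pp_n$ are built from them by composition. To pass to the limit I would invoke the generalized (Pratt) dominated convergence theorem with the $y$-dependent majorant $\norma{f}_{\infty}\,\pp_n(y_m,\dots)\geq|f(\hh_n,j_n)\Pi_n\pp_n|$ (using $0\leq\Pi_n\leq 1$): these majorants converge pointwise to $\norma{f}_{\infty}\pp_n(y_0,\dots)$ and, by the normalizations above, all have integral $\norma{f}_{\infty}t^n/n!$, hence convergent integrals. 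Summing over the finitely many $\bj{n}\in I^n$ and using the uniform convergence of the series shows $y\mapsto P^t_{\varepsilon}f(y,i)$ is continuous, so $P^t_{\varepsilon}f\in C_b(X)$ and the semigroup is Feller.

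I expect the main obstacle to be the first step — writing down and rigorously justifying the series representation from the recursive definition \eqref{def:Y_n}--\eqref{def:xi_n} and the conditional-independence assumptions — combined with the fact that $p$, and hence $\pp_n$, need not be bounded; this is precisely why the ordinary dominated convergence theorem must be replaced by the version with varying majorants in the final step.
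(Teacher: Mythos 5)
Your proof is correct and follows essentially the same route as the paper: decompose $P^t_{\varepsilon}f$ according to the number of jumps $N_t=n$, write each piece as an explicit integral built from $\hh_n,\Pi_n,\pp_n$ (the paper keeps these as integrals of $\Psi_n f$ over $\{N_t=n\}$ against $\pr$ rather than over the simplex, which is equivalent), bound the $n$-th term by $\norma{f}_{\infty}\pr(N_t=n)$, and conclude by termwise continuity plus uniform convergence of the series. If anything, your handling of the limit-passage is more careful than the paper's: since $p$ (hence $\pp_n$) need not be bounded, the paper's bare appeal to ``the Lebesgue dominated convergence theorem'' glosses over precisely the issue you resolve via Pratt's generalized dominated convergence with the varying majorants $\norma{f}_{\infty}\,\pp_n(y_m,\cdot)$, whose integrals are all normalized to the same finite value.
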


\begin{proof}
Let $t\geq 0$ and $f\in C_b(X)$. According to the definition of $N_t$ we have
$$\{N_t=n\}=\{\omega \in \Omega:\; \tau_n(\omega)\leq t<\tau_{n+1}(\omega)\},\;\;\;n\in\n_0.$$
For any $(y,i)\in X$ and any $n\in\n_0$, we define
\begin{align}\label{def:r}
\begin{split}
R_n^t f(y,i):&=\int_{\{N_t=n\}}f(\overline{Y}(t),\overline{\xi}(t))\,d\pr_{(y,i)}=\int_{\{N_t=n\}} f(S_{\xi_{n}}(t-\tau_n,Y_n),\xi_n)\,d\pr_{(y,i)}.
\end{split}
\end{align}
This together with \eqref{pt_eps} allows us to write
\begin{equation}
\label{lip:e6} \bar{P}_{\varepsilon}^t f(y,i)=\ew_{(y,i)} f(\overline{Y}(t),\overline{\xi}(t))=\sum_{n=0}^{\infty} R_n^t f(y,i)\;\;\;\mbox{for}\;\;\;(y,i)\in X.
\end{equation}
We then see that
\begin{equation}\label{lip:e6a} R_0^t f(y,i)=\pr(\tau_1\geq t)f(S_i(t,y),i)=e^{-\lambda t}f(S_i(t,y),i),
\end{equation}
and
\begin{align*}
&R_1^tf(y,i)=\int_{\{N_t=1\}} f(S_{\xi_1}(t-\tau_1, w_{\theta_1}(S_i(\tau_1,y))+H_1),\xi_1)\,d\pr_{(y,i)}\\
&=\int_{\{N_t=1\}} \sum_{j\in I} \int_{B_H(0,\varepsilon)}\int_{\Theta}
f\left(\hh_1(y,i,j,\tau_1,t-\tau_1,\theta,h)\right) \Pi_1(y,i,j,\tau_1,\theta,h)\\
&\quad\times \pp_1(y,i,\tau_1,\theta)\,d\theta \, \nu^{\varepsilon}(dh)\,d\pr.
\end{align*}
In the general case, by setting
\begin{align}\label{def:psi}
\Psi_n f(y,i,&\bs{n},t):=\sum_{\bj{n}\in I^n} \Bigg[\, \int_{B_H(0,\varepsilon)^n}\int_{\Theta^n} \nonumber
f\left(\hh_n(y,i,\bj{n},(\bs{n},t-s_n),\bt{n},\bh{n}),\,j_n\right)\\
&\times \Pi_{n}(y,i,\bj{n},\bs{n},\bt{n},\bh{n})\pp_{n}(y,i,\bj{n-1},\bs{n},\bt{n},\bh{n-1})\,d\bt{n}\, \nu_n^{\varepsilon}(d\bh{n})\Bigg]
\end{align}
for $(y,i)\in X,$ $\bs{n}\in\mathbb{R}_+^n$, and $\bds{n}:=(\Delta \tau_1,\ldots,\Delta \tau_n)$, we obtain
\begin{align} \label{lip:e6b}
&R_n^tf(y,i)=\int_{\{N_t=n\}} \Psi_n f(y,i,\bds{n},t)\,d\pr\;\;\;\mbox{for}\;\;\;(y,i)\in X,\;n\geq 1.
\end{align}
Clearly, $\mathcal{H}_n$, $\Pi_n$ and $\pp_n$ are continuous with respect to $(y,i)$, since  $S_j(\cdot,s)$, $w_{\theta}$, $p(\cdot,\theta)$ and $\pi_{ij}$ are continuous. Therefore, and by the Lebesgue dominated convergence theorem, all the functions $R_n^t f$, $n\in\n_0$, are continuous. Noting that $|R_n^t f(y,i)|\leq \norma{f}_{\infty} \pr(N_t=n)$ for all $n\in\n_0$ and $(y,i)\in X$, we can again apply the Lebesgue theorem (in the discrete version) to deduce that $\bar{P}_{\varepsilon}^t f$ is continuous.
\end{proof}

Using the decomposition \eqref{lip:e6} of $\bar{P}_{\varepsilon}^t$ appearing in the above proof, we can obtain the announced approximation of $\bar{P}_{\varepsilon}^t/t$ {(cf. \cite{b:horbacz_diss})}.

\begin{lemma} \label{lem:pt_apr}
For every $f\in B_b(X)$ there exists a map $u_f:X\times (0,\infty)\to\mathbb{R}$ such that \linebreak $\lim_{t\to 0} \left(\norma{u_f(\cdot,t)}_{\infty}/t\right)=0$
and
\begin{align*}
&\bar{P}_{\varepsilon}^t f(y,i)=e^{-\lambda t} f(S_i(t,y),i)+\lambda e^{-\lambda t}\int_0^t\Psi_1 f(y,i,s,t)\,ds+u_f(y,i,t)\;\;\;\mbox{for}\;\;\;t>0,
\end{align*}
where $\Psi_1$ is defined by \eqref{def:psi}.
\end{lemma}
\begin{proof}
According to \eqref{lip:e6}, we may write $\bar{P}_{\varepsilon}^t f=R_0^t f+R_1^t f + \sum_{n=2}^{\infty}R_n^t f,$
where $R_n^t f$ are defined by \eqref{def:r}. As we have already seen, $R_0^tf$ can be expressed as \eqref{lip:e6a}. Since the distribution of $\tau_1$ conditional on $\{N_t=1\}$ is uniform over $(0,t)$, it follows that

\begin{align*}
R_1^t f(y,i)&=\int_{\{N_t=1\}} \Psi_1 f(y,i,\bds{1},t)\,d\pr
=\pr(N_t=1)\int_{\Omega} \Psi_1 f(y,i,\bds{1},t)\,d\pr(\cdot\,|\,N_t=1)\\
&=\lambda t e^{-\lambda t}\int_0^t \frac{1}{t}\Psi_1 f(y,i,s,t)\,ds=\lambda e^{-\lambda t}\int_0^t \Psi_1 f(y,i,s,t)\,ds.\\
\end{align*}
Define $u_f(y,i,t):=\sum_{n=2}^{\infty}R_n^t f(y,i)$ for $(y,i)\in X$ and $t> 0$. By \eqref{lip:e6b} we obtain
\begin{align*}
\frac{|u_f(y,i,t)|}{t}&\leq \norma{f}_{\infty}\frac{1}{t}\sum_{n=2}^{\infty}\pr(N_t=n)=\norma{f}_{\infty}\frac{1}{t}e^{-\lambda t}\sum_{n=2}^{\infty} \frac{(\lambda t)^n}{n!}\\
&=\norma{f}_{\infty}\frac{1}{t}e^{-\lambda t}(e^{\lambda t}-1-\lambda t)=\norma{f}_{\infty}\left(\frac{1-e^{-\lambda t}}{t}-\lambda e^{-\lambda t} \right)
\end{align*}
for all $t>0$ and $(y,i)\in X$, whence $\lim_{t\to 0}\, (\norma{u_f(\cdot,t)}_{\infty}/t)=0$, as claimed.
\end{proof}

Having established this, we immediately obtain the following:

\begin{lemma}\label{lem:st_cont}
The semigroup $(\bar{P}_{\varepsilon}^t)_{t\geq 0}$ is stochastically continuous, i.e. 
$$\lim_{t\to 0} \bar{P}_{\varepsilon}^t f(x)=f(x)\;\;\;\mbox{for all} \;\;\; x\in X\;\;\;\mbox{and}\;\;\;f\in C_b(X).$$
\end{lemma}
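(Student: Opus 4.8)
The plan is to exploit the approximation formula from Lemma \ref{lem:pt_apr}, which expresses $P^t_{\varepsilon} f$ as a leading term plus a remainder $u_f(\cdot,t)$ that is $o(t)$ uniformly. Fix $f\in C_b(X)$ and $x=(y,i)\in X$. Since $\norma{u_f(\cdot,t)}_{\infty}/t\to 0$ by \eqref{ufprop}, we have in particular $u_f(y,i,t)\to 0$ as $t\to 0$, so this term contributes nothing in the limit. Thus it suffices to analyse the two explicit terms as $t\to 0^+$.

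First I would handle the term $e^{-\lambda t} f(S_i(t,y),i)$. As $t\to 0$ we have $e^{-\lambda t}\to 1$, and by the continuity of the flow $S_i$ in its time variable together with the flow property $S_i(0,y)=y$, we get $S_i(t,y)\to S_i(0,y)=y$. Since $f$ is continuous on $X$ (equivalently, continuous in the $Y$-component for the fixed discrete coordinate $i$, recalling the metric $\rho_c$ from \eqref{def:metric}), it follows that $f(S_i(t,y),i)\to f(y,i)$. Hence this term converges to $f(y,i)$, which is exactly the target value.

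Next I would show that the middle term $\lambda e^{-\lambda t}\int_0^t\Psi_1 f(y,i,s,t)\,ds$ vanishes as $t\to 0$. The key observation is that $\Psi_1 f$ is bounded: from its definition \eqref{def:psi} one sees that $\Psi_1 f(y,i,s,t)$ is an integral of $f$ composed with various maps, weighted by the probabilities $\Pi_1$ and the density $\pp_1$, all of which integrate against probability measures; consequently $|\Psi_1 f(y,i,s,t)|\leq \norma{f}_{\infty}$ uniformly in $s\in[0,t]$. Therefore
$$
\left|\lambda e^{-\lambda t}\int_0^t\Psi_1 f(y,i,s,t)\,ds\right|\leq \lambda e^{-\lambda t}\,t\,\norma{f}_{\infty}\xrightarrow[t\to 0]{}0.
$$
Combining the three estimates gives $\lim_{t\to 0}P^t_{\varepsilon} f(y,i)=f(y,i)$, as required.

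The proof is essentially routine once Lemma \ref{lem:pt_apr} is in hand, so there is no serious obstacle; the only point requiring a little care is verifying the uniform boundedness $|\Psi_1 f|\leq \norma{f}_{\infty}$, which relies on tracking that $\sum_{j\in I}\pi_{ij}=1$ and $\int_{\Theta}p(y,\theta)\,d\theta=1$ so that $\Psi_1 f$ genuinely averages $f$ against a probability kernel. I would state this boundedness explicitly and then let the elementary limits do the rest. Note that the convergence here is pointwise in $x$, which is exactly the definition of stochastic continuity given in the statement; no uniformity in $x$ is needed.
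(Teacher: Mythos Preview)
Your argument is correct and is precisely the route the paper intends: the paper states Lemma~\ref{lem:st_cont} as an immediate consequence of Lemma~\ref{lem:pt_apr} (``Having established this, we immediately obtain the following''), and you have simply written out the three elementary limits that make this immediate. Nothing is missing.
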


In order to prepare for the proof of Theorem \ref{tw:rel}, we also need a few facts concerning semigroups of linear operators on Banach spaces, adapted from \cite{b:dynkinb, b:dynkin_mar}.

Consider the Banach space $(\mathcal{M}_s(X), \norma{\cdot}_{TV})$, where
$$\norma{\mu}_{TV}:=\sup\{|\<f,\mu\>|:\,f\in B_b(E),\;\norma{f}_{\infty}\leq 1\},\;\;\;\mbox{for}\;\;\;\mu\in\mathcal{M}_s(E),$$
and let $\mathcal{M}_s(X)^*$ denote its dual space. For any function $f\in B_b(X)$ we define the functional $\ell_f:\mathcal{M}_s(X)\to\mathbb{R}$ by
$$\ell_f(\mu):=\<f,\mu\>\;\;\;\mbox{for}\;\;\;\mu\in\mathcal{M}_s(X).$$
Clearly, $\ell_f\in \mathcal{M}_s(X)^*$, and $f\mapsto \ell_f$ is an isometric embedding of $B_b(X)$ in $\mathcal{M}_s(X)^*$, i.e. an injective linear map satisfying $\norma{\ell_f}=\norma{f}_{\infty}$ for all $f\in B_b(X)$. Therefore, $B_b(X)$ can be regarded as a subspace of $\mathcal{M}_s(X)^*$, and consequently, it can be endowed with the weak star ($w^*$-) topology inherited from $\mathcal{M}_s(X)^*$. Moreover, one can easily show that $B_b(X)$ is $w^*$-closed in $\mathcal{M}_s(X)^*$.


We say that a sequence $f_n\in B_b(X)$, $n\in\n$, converges $*$-weakly to $f\in B_b(X)$ and we write $\wlim_{n\to \infty} f_n=f$ whenever $(\ell_{f_n})_{n\in\n}$ converges $*$-weakly to $\ell_f$ in $\mathcal{M}_s(X)^*$, that is 
\begin{equation} \label{wlimit} \wlim_{n\to \infty} f_n=f \;\;\;\mbox{iff}\;\;\;\lim_{n\to\infty}\<f_n,\mu\>=\<f,\mu\>\;\;\;\mbox{for all}\;\;\;\mu\in\mathcal{M}_s(X).\end{equation}
It is not hard to show that  $\wlim_{n\to \infty} f_n=f$ is equivalent to the requirement that \hbox{$f_n(x)\to f(x)$} for $x\in X$, and the sequence $(\norma{f_n}_{\infty})_{n\in\n}$ is bounded.

Suppose we are given a subspace $L$ of $B_b(X)\subset \mathcal{M}_s(X)^*$ and a contraction semigroup $(T^t)_{t\geq 0}$ of bounded linear \hbox{operators} $T^t:L\to L$, $t\geq 0$.
Let
\begin{equation}\label{defL0} L_0(T):=\{f\in L:\,\wlim_{t\to 0} T^t f =f\}.\end{equation}
We can consider {the} so-called \emph{weak infinitesimal operator} \cite[Ch.1\,\S\,6]{b:dynkinb} of the semigroup $(T^t)_{t\geq 0}$. It is the function $A:D(A)\to L_0(T)$ defined by
$$Af=\wlim_{t\to 0}\frac{T^t f -f}{t}\;\;\;\mbox{for}\;\;\;f\in D(A),$$
where
$$D(A):=\left\{f\in L:\, \wlim_{t\to 0}\frac{T^t f -f}{t}\;\;\;\mbox{exists and belongs to}\;L_0(T)\right\}.$$
Clearly, $D(A)\subset L_0(T)$. We now give several properties of such an operator, which are useful for our further considerations.

\begin{remark}\label{rem:inf}
Under the assumptions made above the following statements hold (see \cite[p. 40]{b:dynkinb} or \cite[pp. 437-448]{b:dynkin_mar} for the proofs):
\begin{itemize}
\item[(i)]\phantomsection\label{cnd:rem1} $\wcl D(A) =\wcl L_0(T),$ where $\wcl$ denotes the $*$-weak closure in $B_b(X)$.
\item[(ii)]\phantomsection\label{cnd:rem2} For every $f\in D(A)$ the derivative 
$$t\mapsto \frac{d^+ T^t f}{dt}:=\wlim_{h\to 0^+} \frac{T^{t+h} f - T^t f}{h}$$
exists, is $*$-weak continuous from the right, and
{$$\frac{d^+ T^t f}{dt}=A T^t f =T^t A f\;\;\;\mbox{and}\;\;\;
T^t f-f=\int_0^t T^s Af\,ds\;\;\;\mbox{for all}\;\;\; t\geq 0.$$}
\item[(iii)]\phantomsection\label{cnd:rem3} For any $\beta>0$, the operator $\beta id-A:D(A)\to L_0(T)$ is invertible and the inverse operator $R_{\beta}:=(\beta id-A)^{-1}:L_0(T)\to D(A)$ (called the \emph{resolvent} of~$A$) is given by
$$R_{\beta} f(x)= \int_0^{\infty} e^{-\beta t}T^t f(x)\,dt,\;\;\;x\in X,\; f\in L_0(t).$$
\end{itemize}
\end{remark}

We are now in a position to prove the first main result of Section \ref{sec:cont}.
\begin{proof}[Proof of Theorem \ref{tw:rel}]
According to Lemma \ref{lem:lip}, we may consider the contraction semigroup \hbox{$\tP^t:C_b(X)\to C_b(X)$}, $t\geq 0$, given by 
 $\tP^t f= \bar{P}_{\varepsilon}^t f$ for $f\in C_b(X).$
From Lemma \ref{lem:st_cont} we know that $L_0(P)=C_b(X)$, and thus we can define the weak infinitesimal generator \hbox{$B:D(B)\to C_b(X)$} of the semigroup $(\tP^t)_{t\geq 0}$. 

Define the maps $Q^t:C_b(X)\to C_b(X)$, $t\geq 0$, by
$$Q^t f(y,i)=f(S_i(t,y),i)\;\;\;\mbox{for}\;\;\;(y,i)\in X.$$
Obviously, $(Q^t)_{t\geq 0}$ forms a contraction semigroup of linear operators and $L_0(Q)=C_b(X)$. Let $A: D(A)\to C_b(X)$ denote the weak infinitesimal generator of this semigroup.

We shall prove that $D(A)=D(B)$ and
\begin{equation}\label{niezc:e1}  Bf=\lambda W f+Af-\lambda f \,\;\;\;\mbox{for}\;\;\;f\in D(B), \end{equation}
where $W$ is determined by \eqref{def_W}. To do this, fix $f\in D(A)$, and let $\Psi_1$ be the function determined by \eqref{def:psi} for $n=1$. It now follows from Lemma \ref{lem:pt_apr} that there exists $u_f:X\times (0,\infty)\to\mathbb{R}$ such that
\begin{align*}
\frac{1}{t}\left(\widetilde{P}^t f(y,i) - f(y,i)\right)&=\lambda e^{-\lambda t}\frac{1}{t} \int_0^t  \Psi_1 f(y,i,s,t)\,ds+ e^{-\lambda t}\frac{1}{t}( f(S_i(t,y),i) -f(y,i))\\
&\quad-\lambda\frac{1- e^{-\lambda t}}{\lambda t}f(y,i)+\frac{u_f(y,i,t)}{t}\;\;\;\mbox{for}\;\;\;(y,i)\in X,\;t>0,
\end{align*}
and $\wlim_{t\to 0} u_f(\cdot,t)/t=0$. From the continuity of $f$, $S_j(\cdot,y)$, $w_{\theta}$ and the boundedness of $f$, we infer that the maps $(s,t)\mapsto \Psi_1 f(y,i,s,t)$ are continuous. Therefore, and by the mean value theorem, for each $t>0$ and $(y,i)\in X$, we can choose $s_{(y,i)}(t)\in(0,t)$ such that
$$\frac{1}{t}\int_0^t \Psi_1 f(y,i,s,t)\, ds = \Psi_1 f(y,i,s_{(y,i)}(t),t)\;\;\;\mbox{for}\;\;\;(y,i)\in X,\; t>0.$$
Keeping in mind the fact that $|\Psi_1|\leq \norma{f}_{\infty}$, we see that
$$ \wlim_{t\to 0} e^{-\lambda t} \frac{1}{t}\int_0^t \Psi_1 f(\cdot\,,s,t)\, ds = \Psi_1 f(\cdot\,,0,0)=Wf,$$
since the expression under the limit sign is bounded for all $(y,i)\in X$ and all $t$ in~some neighborhood of zero. Moreover, by the definition of $(Q^t)_{t\geq 0}$, we have
$$\frac{1}{t}(f(S_i(t,y),i) -f(y,i))=\frac{1}{t}(Q^t f(y,i) - f(y,i))\;\;\;\mbox{for} \;\;\;(y,i)\in X,\;t>0,$$
and $\wlim_{t\to 0} e^{-\lambda t}(Q^t f-f)/t=Af$ since $f\in D(A)$. Summarizing, we obtain
$$
\wlim_{t\to 0}\frac{1}{t}(\widetilde{P}^t f- f)=\lambda Wf+Af-\lambda f\in C_b(X),
$$
which gives \eqref{niezc:e1} and $D(A)\subset D(B)$. Clearly, letting $f\in D(B)$, we conclude analogously that $D(B)\subset D(A)$.

Let us now observe that $P=GW$, where $G$ and $W$ are defined by \eqref{def_G} and \eqref{def_W}, respectively. Indeed, we have for all
$f\in B_b(X)$ and $(y,i)\in X$,
\begin{align*}
GWf(y,i)&= \int_0^{\infty}\lambda e^{-\lambda t}Wf(S_i(t,y),i)\,dt\\ 
&=\sum_{j\in I}\int_0^{\infty} \int_{B_H(0,\varepsilon)}  \int_{\Theta} \lambda  e^{-\lambda t} f(w_{\theta}(S_i(t,y))+h,j)\pi_{ij}(w_{\theta}(S_i(t,y))+h)\\
&\hspace{3.4cm} \times p(S_i(t,y),\theta)\,d\theta \nu^{\varepsilon}(dh)dt=P f(y,i).
\end{align*}
Further, consider the resolvent $R_{\lambda}:C_b(X)\to D(A)$ of the operator $A$. As we pointed out in Remark \ref{rem:inf}\hyperref[cnd:rem3]{(iii)}, we have
$$R_{\lambda} f(y,i)=\int_0^{\infty} e^{-\lambda t} Q^t f(y,i)\,dt\;\;\;\mbox{for}\;\;\;f\in C_b(X),\,(y,i)\in X,$$
which implies $G|_{C_b(X)}=\lambda R_{\lambda}$. Hence, in particular,  $G(C_b(X))\subset D(A)$ and
\begin{equation}\label{resol_prop}
G(\lambda \operatorname{id} -A)f=(\lambda \operatorname{id} -A)G f=\lambda f\;\;\;\mbox{for}\;\;\; f\in D(A).
\end{equation}

We now proceed to show assertion \hyperref[cnd:1r]{(1)}. For this purpose, suppose that $P_{\varepsilon}$ has an invariant measure $\mu^*\in\mathcal{M}_1(X)$, and let $\nu^*:=\mu^* G$. Using the identity $P=GW$, we have 
$$\nu^* W= \mu^* GW=\mu^* P=\mu^*,$$
\begin{equation} \label{niezc:e4} 
\<f,\nu^*\>=\<G f, \mu^*\>=\< PG f, \mu^*\>=\<GWGf, \mu^*\>=\<WGf,\nu^*\>,\;\;\; f\in B_b(X).
\end{equation}
Letting $f\in D(A)=D(B)$, we can apply \eqref{niezc:e4} with $(\lambda \operatorname{id}-A)f$ in place of $f$, and use identity \eqref{resol_prop} to obtain
$$
\<(\lambda \operatorname{id}-A)f,\nu^*\>=\<WG(\lambda \operatorname{id}-A)f,\nu^*\>=\<\lambda W f,\nu^*\>.
$$
Consequently, from \eqref{niezc:e1} we then see that
\begin{equation}\label{niezc:e5} 
\<Bf,\nu^*\>=\< \lambda W f+Af-\lambda f,\nu^*\>=0\;\;\;\mbox{for}\;\;\;f \in D(B).
\end{equation}

From Remark \ref{rem:inf}\hyperref[cnd:rem2]{(ii)} we know that, for $f\in D(B)$, the map $s\mapsto \widetilde{P}^s Bf$ is $*$-weak continuous from the right and $\int_0^t \widetilde{P}^s Bf\,ds=\widetilde{P}^t f - f.$
Therefore, by \eqref{niezc:e5}, we obtain for $f\in D(B)$ and $t\geq 0$,
\begin{align*}
\<\widetilde{P}^t f -f,\nu^*\>=\<\int_0^t B\widetilde{P}^s f\,ds,\nu^*\>=\int_0^t\<B\widetilde{P}^s f,\nu^*\>\,ds=0,
\end{align*}
and thus $\< f,\nu^*\widetilde{P}^t\>=\<f,\nu^*\>.$ Since Remark \ref{rem:inf}\hyperref[cnd:rem1]{(i)} provides $C_b(X)\subset\wcl D(B)$, by using \eqref{wlimit}, we can conclude that this equality holds for all $f\in C_b(X)$. It then follows that $\nu^*$ is invariant for the semigroup $(\bar{P}_{\varepsilon}^t)_{t\geq 0}$, which completes the proof of \hyperref[cnd:1r]{(1)}. 

For the proof of statement \hyperref[cnd:2r]{(2)}, let $\nu^*\in\mathcal{M}_1(X)$ be an invariant measure for the semigroup $(\bar{P}_{\varepsilon}^t)_{t\geq 0}$, and set $\mu^*:=\nu^* W$. Letting $h\in D(B)$ and differentiating at $t=0$ the identity $\<\widetilde{P}^t h, \nu^*\>=\<h,\nu^*\>$, we obtain $\<Bh, \nu^*\>=0$. According to \eqref{niezc:e1} and the fact that $D(B)=D(A)$, this implies
$\<\lambda Wh,\nu^*\>=\<(\lambda \operatorname{id} - A)h, \nu^*\>$. If we now let $f\in D(A)$ and take $h=Gf$ then, in view of \eqref{resol_prop}, we infer that
$$\<f,\nu^*\>=\frac{1}{\lambda}\<({\lambda}\operatorname{id} - A)Gf, \nu^*\>=\<WGf, \nu^*\>.$$
Since $C_b(X)\subset\wcl D(A)$, it is clear that the latter equality holds for any \hbox{$f\in C_b(X)$}, and therefore $\nu^*=\nu^*WG$. Finally, using the fact that $P=GW$, we obtain \hbox{$\mu^* G=\nu^*WG=\nu^*$}, as well as
$$\mu^*=\nu^* W=(\nu^* WG)W=(\nu^*W)(GW)=\mu^* P,$$
which completes the proof.
\end{proof}

\subsubsection{Proof of Theorem \ref{SLLN_cont}} \label{sec:proof4}
It now remains to prove Theorem \ref{SLLN_cont}, which provides the SLLN for the examined PDMP. The main idea of our approach is based on comparision of the averages  $t^{-1} \int_0^t f({\overline{Y}(s),\overline{\xi}(s)})\,ds$ and $n^{-1} \sum_{k=0}^{N_t-1} Gf({Y_k,\xi_k})$, where $f\in Lip_b(X)$. We {aim to }show that the difference between them vanishes as $t\to\infty$, which enables the application of Theorem \ref{tw:slln-dyskr}. The rest then follows from Theorem \ref{tw:rel}. In order to show that the above-mentioned limit holds, we use arguments similar to those in the proof of \cite[Lemma 2.5]{b:benaim1}.

\begin{lemma} \label{lem:cd_conv}
For every $f\in B_b(X)$ we have
$$\lim_{t\to \infty} \left(\frac{1}{t} \int_0^t f\left({\overline{Y}(s),\overline{\xi}(s)}\right)\,ds-\frac{1}{N_t}\sum_{k=0}^{N_t-1} Gf(Y_k,\xi_k) \right)=0 \;\;\;\mbox{{a.s.}},$$
where $G:B_b(X)\to B_b(X)$ is determined by \eqref{def_G}.
\end{lemma}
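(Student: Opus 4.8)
The plan is to reduce the continuous-time average to a discrete-time one by splitting the integral over the successive inter-jump intervals. On each interval $[\tau_k,\tau_{k+1})$ the interpolation formula \eqref{zz} gives $\overline{Y}(s)=S_{\xi_k}(s-\tau_k,Y_k)$ and $\overline{\xi}(s)=\xi_k$, so after the substitution $u=s-\tau_k$,
$$\int_0^t f(\overline{Y}(s),\overline{\xi}(s))\,ds=\sum_{k=0}^{N_t-1}\phi_k+r_t,\qquad \phi_k:=\int_0^{\Delta\tau_{k+1}} f(S_{\xi_k}(u,Y_k),\xi_k)\,du,$$
where $r_t:=\int_{\tau_{N_t}}^t f(\overline{Y}(s),\overline{\xi}(s))\,ds$ is the contribution of the last, incomplete interval. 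Writing $\frac{1}{t}\sum_{k=0}^{N_t-1}\phi_k=\frac{N_t}{t}\cdot\frac{1}{N_t}\sum_{k=0}^{N_t-1}\phi_k$, I would then isolate three sources of discrepancy — the renewal factor $N_t/t$ against $\lambda$, the averaged $\phi_k$ against $\lambda^{-1}Gf(Y_k,\xi_k)$, and the boundary term $r_t/t$ — and show each vanishes almost surely.

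The central step is a martingale argument for the $\phi_k$. Since $Y_k,\xi_k$ are $\mathcal{G}_k$-measurable while $\Delta\tau_{k+1}$ is independent of $\mathcal{G}_k$ and exponentially distributed with parameter $\lambda$, a Fubini computation gives $\ew\bigl(\int_0^{T}g(u)\,du\bigr)=\int_0^\infty e^{-\lambda u}g(u)\,du$ for $T$ exponential with parameter $\lambda$, whence
$$\ew(\phi_k\,|\,\mathcal{G}_k)=\int_0^{\infty} e^{-\lambda u}\, f(S_{\xi_k}(u,Y_k),\xi_k)\,du=\frac{1}{\lambda}\,Gf(Y_k,\xi_k),$$
with $G$ as in \eqref{GG}. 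Thus $D_k:=\lambda\phi_k-Gf(Y_k,\xi_k)$ is a martingale-difference sequence — each $D_k$ is $\mathcal{G}_{k+1}$-measurable (as $\Delta\tau_{k+1}=\tau_{k+1}-\tau_k$ is $\mathcal{G}_{k+1}$-measurable) and $\ew(D_k\,|\,\mathcal{G}_k)=0$ — so $M_n:=\sum_{k=0}^{n-1}D_k$ is a $(\mathcal{G}_n)$-martingale. Using $|f|\le\norma{f}_{\infty}$ together with $\ew(\Delta\tau_{k+1}^2\,|\,\mathcal{G}_k)=2/\lambda^2$, I would bound the conditional second moments uniformly, $\ew(D_k^2\,|\,\mathcal{G}_k)\le 2\norma{f}_{\infty}^2$; since then $\sum_k k^{-2}\ew(D_k^2)<\infty$, the strong law of large numbers for martingales yields $M_n/n\to0$ a.s. Consequently $\frac{1}{N_t}\sum_{k=0}^{N_t-1}\phi_k=\frac{1}{\lambda N_t}\sum_{k=0}^{N_t-1}Gf(Y_k,\xi_k)+\frac{M_{N_t}}{\lambda N_t}$, and $M_{N_t}/N_t\to0$ a.s. because $N_t\to\infty$.

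It remains to treat the renewal quantities, which follow from the classical SLLN applied to the i.i.d. increments $\Delta\tau_k$: one has $\tau_n/n\to1/\lambda$ a.s., and hence $N_t/t\to\lambda$ a.s. via the sandwiching $\tau_{N_t}\le t<\tau_{N_t+1}$. These give $N_t/(\lambda t)\to1$, and since $\tau_{N_t}/t=(\tau_{N_t}/N_t)(N_t/t)\to1$ we obtain $(t-\tau_{N_t})/t\to0$, so $|r_t|/t\le\norma{f}_{\infty}(t-\tau_{N_t})/t\to0$ a.s. Putting everything together,
\begin{align*}
&\frac{1}{t}\int_0^t f(\overline{Y}(s),\overline{\xi}(s))\,ds-\frac{1}{N_t}\sum_{k=0}^{N_t-1}Gf(Y_k,\xi_k)\\
&\qquad=\left(\frac{N_t}{\lambda t}-1\right)\frac{1}{N_t}\sum_{k=0}^{N_t-1}Gf(Y_k,\xi_k)+\frac{N_t}{\lambda t}\cdot\frac{M_{N_t}}{N_t}+\frac{r_t}{t},
\end{align*}
and, since the averaged $Gf$ is bounded by $\norma{f}_{\infty}$, all three terms tend to $0$ as $t\to\infty$, which is the assertion. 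The main obstacle is the martingale step: one must fix the correct filtration $(\mathcal{G}_n)$, verify the conditional-mean identity through the exponential holding-time computation, and secure the uniform conditional second-moment bound that drives the martingale SLLN; the renewal and boundary estimates are then routine.
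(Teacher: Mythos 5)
Your proposal is correct and follows essentially the same route as the paper's proof: the same decomposition of the time average over inter-jump intervals, the same martingale $M_n$ (up to the harmless factor $\lambda$) built from the conditional-expectation identity $\ew(\phi_k\,|\,\mathcal{G}_k)=\lambda^{-1}Gf(Y_k,\xi_k)$, the martingale SLLN for $M_{N_t}/N_t\to 0$, and the renewal limit $N_t/t\to\lambda$. The only cosmetic deviations are that the paper controls the boundary term via $|r_t|\le\norma{f}_\infty\,\Delta\tau_{N_t+1}/N_t$ and Borel--Cantelli rather than your sandwich $\tau_{N_t}/t\to1$, and your stated conditional second-moment constant $2\norma{f}_\infty^2$ should really be $6\norma{f}_\infty^2$ (the paper's value), neither of which affects the validity of the argument.
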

\begin{proof}
Let $f\in B_b(X)$, and define
$$
M_0:=0,\;\;\;M_n:=\sum_{k=0}^{n-1} \left(\int_{\tau_k}^{\tau_{k+1}} f\left(\overline{Y}(s),\overline{\xi}(s)\right)\,ds-\frac{1}{\lambda} Gf(Y_k,\xi_k)\right)\;\;\;\mbox{for}\;\;\;n\in\n.
$$
Moreover, put $\Delta \tau_0:=0$. We will first show that $(M_n)_{n\in\n_0}$ is a martingale with respect to the natural filtration of $(Y_n, \xi_n,\Delta \tau_n)_{n\in\n_0}$, further denoted $(\mathcal{F}_n)_{n\in\n_0}$, and that the increments of $(M_n)_{n\in\n_0}$ are uniformly bounded in the $\mathcal{L}^2(\mathbb{P})$-norm. For this purpose, let us define $F:X\times\mathbb{R}_+\to \mathbb{R}$ by
$$F(y,i,t)=\int_0^t f(S_i(s,y),i)\,ds\;\;\;\mbox{for}\;\;\;(y,i)\in X,\;t\geq 0.$$
We then see that $F$ is Borel measurable, and due to \eqref{zz}, we can write
$$\int_{\tau_k}^{\tau_{k+1}} f\left({\overline{Y}(s),\overline{\xi}(s)}\right)\,ds=\int_{\tau_k}^{\tau_{k+1}} f(S_{\xi_k}(s-\tau_k,Y_k),\xi_k)\,ds=F(Y_k,\xi_k,\Delta \tau_{k+1})
$$
for every $k\in\n_0$. This shows that $(M_n)_{n\in\n}$ is adapted to $(\mathcal{F}_n)_{n\in\n}$  and gives
\begin{equation} \label{mar:int} 
M_{n+1}-M_n=F(Y_n,\xi_n,\Delta \tau_{n+1})-\lambda^{-1}Gf(Y_n,\xi_n)\;\;\;\mbox{for}\;\;\;n\in\n_0.
\end{equation}
We now observe that, for any $n\in\n_0$ and any $(y,i,u)\in X\times \mathbb{R}_+$,
\begin{align*}
&\ew\lbrack F(Y_n,\xi_n,\Delta \tau_{n+1})\,|\,Y_n=y,\,\xi_n=i,\,\Delta\tau_n=u\rbrack=\int_0^{\infty} F(y,i,t)\,\pr(\Delta \tau_{n+1}\in dt)\\
&=\int_0^{\infty} \lambda e^{-\lambda t}\left(\int_0^t  f(S_i(s,y),i)\,ds\right)\,dt=\int_0^{\infty}\left(\int_s^{\infty} \lambda e^{-\lambda t}\,dt \right)f(S_i(s,y),i)ds\\
&=\int_0^{\infty} e^{-\lambda s} f(S_i(s,y),i)\,ds=\lambda^{-1} G f(y,i).
\end{align*}
Consequently, using the Markov property we can conclude that
\begin{align} \label{mar:war}
\ew\lbrack F(Y_n,\xi_n,\Delta \tau_{n+1})\,|\,\mathcal{F}_n\rbrack&=\ew\lbrack F(Y_n,\xi_n,\Delta \tau_{n+1})\,|\,Y_n,\,\xi_n,\,\Delta\tau_n\rbrack \nonumber\\
&=\lambda^{-1} G f(Y_n,\xi_n)\;\;\;\mbox{for all}\;\;\;n\in\n_0.
\end{align}
From \eqref{mar:int} and \eqref{mar:war} it now follows that
\begin{align*}
\ew\lbrack M_{n+1}-M_n\,|\,\mathcal{F}_n\rbrack &= \ew\left[F(Y_n,\xi_n,\Delta \tau_{n+1})\,|\mathcal{F}_n\right]-\lambda^{-1} Gf(Y_n,\xi_n)=0,
\end{align*}
whence $(M_n)_{n\in\n}$ is a martingale.
Finally, using the fact that $|F(\cdot,t)|\leq t\norma{f}_{\infty}$ for all $t\geq 0$, and $|Gf|\leq \norma{f}_{\infty}$, we obtain for all $n\in\n_0$,
\begin{align*}
\ew_x\left[(M_{n+1}-M_n)^2\right]&=\ew_x\left[\, \left(F(Y_n,\xi_n,\Delta \tau_{n+1})-\lambda^{-1}  Gf(Y_n,\xi_n)\right)^2 \,\right]\\
& \leq 2 \ew_x \lbrack F(Y_n,\xi_n,\Delta \tau_{n+1})^2 \rbrack + 2 \ew_x \left[\lambda^{-2} Gf(Y_n,\xi_n)^2 \right] \\
&\leq 2\norma{f}_{\infty}^2\left(\ew\lbrack (\Delta\tau_{n+1})^2\rbrack + \frac{1}{\lambda^2}\right)=6\lambda^{-1}\norma{f}_{\infty}^2.
\end{align*}
The claim stated at the beginning of the proof is now established. 

Let us now define
$$r_t=\frac{1}{N_t}\int_{\tau_{N_t}}^t f\left({\overline{Y}(s),\overline{\xi}(s)}\right)\,ds\;\;\;\mbox{if}\;\;\;\tau_1\leq t,\;\;\;\mbox{and}\;\;\;r_t=0\;\;\;\mbox{otherwise}.$$
Then
\begin{equation}\label{rest_esti}|r_t|\leq \norma{f}_{\infty}\frac{\Delta \tau_{N_t+1}}{N_t}\;\;\;\mbox{whenever}\;\;\;t\geq \tau_1,\end{equation}
and, for $\tau_1\leq t$, we can write
\begin{equation} 
\frac{1}{t}\int_0^t f\left({\overline{Y}(s),\overline{\xi}(s)}\right)\,ds=\frac{N_t}{t}\frac{1}{N_t}\sum_{k=0}^{N_t-1}\int_{\tau_k}^{\tau_{k+1}} f\left({\overline{Y}(s),\overline{\xi}(s)}\right)\,ds+\frac{N_t}{t}r_t .
\end{equation}
Consequently, keeping in mind the definition of $M_n$, we obtain
\begin{align*}
&\frac{1}{t}\int_0^t f\left({\overline{Y}(s),\overline{\xi}(s)}\right)\,ds-\frac{1}{N_t}\sum_{k=0}^{N_t-1} Gf\left({Y_k,\xi_k}\right)\\
&=\frac{N_t}{t}\frac{M_{N_t}}{N_t}+\frac{1}{\lambda}\left(\frac{N_t}{t}-\lambda\right)\frac{1}{N_t}\sum_{k=0}^{N_t-1} Gf({Y_k,\xi_k})+\frac{N_t}{t}r_t\;\;\;\mbox{for}\;\;\;t\geq \tau_1.
\end{align*}
We now only need to show that the right-hand side of the above equality tends a.s to~$0$, as $t\to \infty$. To do this, we first note that $n^{-1}\sum_{k=0}^{n-1} Gf(Y_k,\xi_k)$ is bounded (by $\norma{f}_{\infty}$) for all $n$.
Further, from the Elementary Renewal Theorem, we know that $N_t/t\to \lambda$ a.s, and thus $N_t\to \infty$ a.s. It then follows from the Borel--Cantelli Lemma that $\Delta\tau_{N_t+1}/N_t \to 0$ a.s, since 
$$\sum_{n=1}^{\infty} \pr\left(\Delta \tau_{n+1}/n\geq  \varepsilon\right)=\sum_{n=1}^{\infty} e^{-\lambda n \varepsilon}<\infty\;\;\;\mbox{for any}\;\;\; \varepsilon>0.$$ This together with \eqref{rest_esti} ensures that $r_t\to 0$ (a.s). 
Finally, by the SLLN for martingales \hbox{(\cite[Theorem 2.18]{b:limit_mar})} we have $M_k/k\to 0$ a.s., as $k\to \infty$, and thus $M_{N_t}/{N_t}\to 0$ a.s. This yields the desired conclusion and completes the~proof.
\end{proof}
Before proceeding to the proof of Theorem \ref{SLLN_cont}, let us recall that, apart from the requirement that conditions \textnormal{\hyperref[cnd:A1]{(A1)}-\hyperref[cnd:A5]{(A5)}} hold with parameters satisfying \eqref{ll}, we have made an additional assumption that $\mathcal{L}$, appearing in \hyperref[cnd:A2]{(A2)}, is constant. Hence, for some $\bar{L}$, we can write $\mathcal{L}(t)=\bar{L}$ for all $t\geq 0$.

\begin{proof}[Proof of Theorem \ref{SLLN_cont}]
Fix $f\in Lip_b(X)$, ${(y,i)}\in X$, and let $\mu^*$ denote the unique invariant distribution of $(Y_n,\xi_n)_{n\in\n_0}$ (which exists by Theorem \ref{tw:tempo}). It follows from Theorem~\ref{tw:rel} that $\nu^*=\mu^*G$. Therefore, in view of Theorem \ref{tw:slln-dyskr}, it suffices to show that $Gf\in Lip_b(X)$, since then
$$\lim_{t\to \infty} \frac{1}{N_t} \sum_{k=0}^{N_t-1}Gf({Y_k,\xi_k})=\<Gf,\mu^*\>=\<f,\nu^*\>\;\;\;\pr_{(y,i)} - a.s.,$$
which by Lemma \ref{lem:cd_conv} gives the desired claim. 

For this purpose, let $(y,i),(z,l)\in X$. By condition \hyperref[cnd:A2]{(A2)} we then have
\begin{align*}
|Gf(y,i)-Gf(z,l)|&\leq \int_0^{\infty} \lambda e^{-\lambda t}|f(S_{i}(t,y),i)-f(S_{l}(t,z),l)|\,ds\\
&\leq \lambda |f|_{Lip}  \int_0^{\infty} e^{-\lambda t}(\norma{S_i(t,y)-S_l(t,z)}+c\disc(i,l))\,dt\\
&\leq\lambda |f|_{Lip} \left[\frac{L}{\lambda-\alpha}\norma{y-z}+\left(\frac{\bar{L}}{\lambda^2}+\frac{c}{\lambda} \right)\disc(i,l)\right].\\
\end{align*}
Since $\bar{L}=M_{\mathcal{L}}\leq c L$ due to \eqref{nc}, we see that
$$
|Gf(y,i)-Gf(z,l)|\leq \lambda |f|_{Lip} \left(\frac{L}{\lambda-\alpha}+\frac{L}{\lambda^2}+\frac{1}{\lambda}\right)\rho_c((y,i),(z,l)).
$$
The proof is now complete.
\end{proof}

\section*{Appendix}\label{appendix}
To illustrate the main idea behind Theorem \ref{ks-stab} and simultaneously assure a certain level of self-containedness, we shall give a general sketch of its proof. Details can be found in \cite{b:kapica}. As we have noted in Section \ref{section2}, for a given substochastic kernel $Q:E^2\times \mathcal{B}(E^2)\to\left [0,1\right]$ satisfying \eqref{qeq}, we can define a coupling of $P$ whose transition function takes the form $B=Q+R$.  In order to distinguish the case where the next step of the coulpling is drawn only according to $Q$ from the case when it is determined only by $R$, one can consider the augmented space $\widehat{E^2}:=E^2\times\{0,1\}=(E^2 \times \{0\})\cup(E^2 \times \{1\}) $ and the stochastic kernel $B$ on 
$\widehat{E^2}\times \mathcal{B}(\widehat{E^2})$ given by 
$$\widehat{B}(x,y,\theta,C):=(\delta_0\otimes R(x,y,\cdot))(C)+(\delta_1 \otimes Q(x,y,\cdot))(C)$$
for $(x,y,\theta)\in\widehat{E^2}$ and $C\in \mathcal{B}(\widehat{E^2})$, where $\delta_0$ and $\delta_1$ stand for the Dirac measures. Let $\widehat{\Phi}_n:=(\Phi_n^1, \Phi_n^2, \theta_n)$, $n\in\mathbb{N}_0$, be an \hbox{$\widehat{E^2}$-valued} Markov chain with transition function $\widehat{B}$, defined on a suitable probability space $(\widehat{\Omega}, \mathcal{\widehat{A}},\widehat{\pr})$ (for instance, one can take $\widehat{\Omega}=(\widehat{E^2})^{\mathbb{N}_0}$ and $\mathcal{\widehat{A}}=\mathcal{B}(\widehat{\Omega}))$. Then, for any $(x,y,\theta)\in\widehat{E}^2$ and $A\in\mathcal{B}(E^2)$,
$$\widehat{\pr}_{x,y,\theta}(\widehat{\Phi}_n\in A\times \{1\})=Q^n(x,y, A),\;\;\;\widehat{\pr}_{x,y,\theta}(\widehat{\Phi}_n\in A\times \{0\})=R^n(x,y, A),$$
$$\mbox{and}\;\;\;\widehat{\pr}_{x,y,\theta}((\Phi_n^1,\Phi_n^1)\in A)=B^n(x,y,A)$$
(independently of $\theta\in\{0,1\}$), where $Q^n(\cdot,A):=Q^n \mathbbm{1}_A$, and the analogous notation is employed for the kernels $R$ and $B$. Define $\tau$ to be an absorption time on $E^2\times\{1\}$, that is,
$$\tau:=\inf\{n\in\n:\, \widehat{\Phi}_m\in E\times\{1\}\;\;\mbox{for}\;\;m\geq n\}=\inf\{n\in\n:\, \theta_m=1\;\;\mbox{for}\;\;m\geq n\}.$$
Further, for $A\in \mathcal{B}(E^2)$ and $k\in \n$, let $\kappa^{(k)}_A$ denote the first return time to the set $A$ after time $k-1$, i.e. $\kappa^{(k)}_A:=\inf\{n\in\n: n\geq k,\;(\Phi_n^1,\Phi_n^2)\in A\}$. 

Then, for any $f\in\mathcal{R}_{FM}(E)$, $x,y\in E$ and  $n,M,N\in\n$ satisfying $n>M>N$, we have
\begin{align}
\label{com0}
\begin{split}
|P^n f(x) - P^n f(y)|&\leq \int_{E^2} |f(u)-f(v)|\, B^n(x,y,du,dv)\\
&\leq \int_{E^2} \rho(u,v)\,\widehat{\pr}_{x,y,\theta}\left(\kappa^{(N)}_K \leq M,\,\tau\leq N,\,\left(\Phi_n^1,\Phi_n^2\right)\in du\times dv\right)\\
&\quad+2\widehat{\pr}_{x,y,\theta}\left(\kappa^{(N)}_K>M\right)+2\widehat{\pr}_{x,y,\theta}(\tau>N),
\end{split}
\end{align}
where $K$ is given by \eqref{zb_k}. Let $I_j$ (where $j\in\{1,2,3\}$) denote the $j$th component on the right-hand side of the above inequality. The contraction property assumed for $Q$ in \hyperref[cnd:B3]{(B3)} allows one to show that \hbox{$I_1\leq \Gamma q^{n-M}$} for some $\Gamma>0$ (where $q\in(0,1)$). Condition \hyperref[cnd:B1]{(B1)} guarantees that $\overline{V}(x,y):=V(x)+V(y)$ is a Lyapunov function on $E^2$ and $B\overline{V}\leq a \overline{V}+2b$. It then follows (from \cite[Lemma 2.2]{b:kapica}) that for \hbox{$J:=\{(u,v):\, \overline{V}(u,v)<4b(1-a)^{-1}\}$} there exist constants $\delta_{\kappa}\in (0,1)$ and $C_{\kappa}$ such that $\widehat{\ew}_{x,y,\theta}\Big(\delta_{\kappa}^{-\kappa^{(N)}_J}\Big)\leq \delta_{\kappa}^{-N}C_{\kappa}(1+\overline{V}(x,y))$. This together with the strong Markov property and condition \eqref{fastint}, assumed in \hyperref[cnd:B2]{(B2)}, gives  
\begin{equation}\label{com:1}\widehat{\ew}_{x,y,\theta}\Big(\delta_{\kappa}^{-\kappa^{(N)}_K}\Big)\leq \delta_{\kappa}^{-N}\bar{C}C_{\kappa}(1+\overline{V}(x,y)).\end{equation} 
Hence
$I_2\leq 2\bar{C}C_{\kappa}\,\widehat{\delta}^{M-dN}(1+\overline{V}(x,y))$ with $\widehat{\delta}:=\max\{\delta_{\kappa},\zeta\}$ and some $d\geq 1$, where $\bar{C}$ and $\zeta$ are the constants occuring in \eqref{fastint}. To estimate $I_3$ we first observe that, for some $\eta>0$, and for all $k\in\n$ and $(u,v)\in F$,
$$Q^k(u,v,U(q^k\rho(u,v)))\geq \eta^k\;\;\;\mbox{and}\;\;\;Q^k(u,v,X^2)\geq 1-\frac{l}{1-q^{\nu}}\rho^{\nu}(u,v).$$
The first inequality can be obtained inductively from condition \hyperref[cnd:B4]{(B4)}, and the second one follows from \hyperref[cnd:B3]{(B3)} and \hyperref[cnd:B5]{(B5)}. These estimates further imply that
\begin{equation}\label{com:2}\widehat{\pr}_{u,v,\theta}\left(\widehat{\Phi}_k\in E^2\times\{1\}\;\;\mbox{for all}\;\;k\in\n\right)=\lim_{k\to\infty} Q^k(u,v,E^2)\geq p,\;\;\;(u,v)\in F,\end{equation}
for some $p>0$. Let $\epsilon:=\kappa^{(1)}_{E^2\times\{0\}}$. Then hypotheses \hyperref[cnd:B3]{(B3)} and \hyperref[cnd:B5]{(B5)} also provide the existence of $\delta_{\epsilon}\in (0,1)$ and \hbox{$C_{\epsilon}>0$} such that  
\begin{equation}\label{com:3}
\widehat{\ew}_{u,v,\theta} \left(\mathbbm{1}_{\{\epsilon<\infty\}}\delta_{\epsilon}^{-\epsilon}\right)\leq C_{\epsilon}\;\;\;\mbox{for}\;\;\; (u,v)\in F.\end{equation}
Having established \eqref{com:1} - \eqref{com:3} (and keeping in mind that $F\subset K$), one can show (as in lemma \cite[Lemma 2.2]{b:kapica}) that there exist $\delta_\tau\in (0,1)$ and $C_{\tau}>0$ for which
$\widehat{\ew}_{x,y,\theta}(\delta_{\tau}^{-\tau})\leq C_{\tau}(1+\overline{V}(x,y))$, whence $I_3\leq 2C_{\tau}\delta_{\tau}^N(1+\overline{V}(x,y))$. It now follows easily from \eqref{com0} that there exist $\beta\in (0,1)$ and $C\in\mathbb{R}$ such that
$$|P^n f(x) - P^{n} f(y)|\leq C\beta^n(V(x)+V(y)+1) \;\;\;\mbox{for}\;\;\;f\in\mathcal{R}_{FM}(E),\,x,y\in E,\,n\in\n.$$
Consequently, for any $\mu,\nu\in\mathcal{M}_1(E)$ satisfying $\<V,\mu\><\infty,\,\<V,\nu\><\infty$, we obtain
\begin{equation}\label{com:4} d_{FM}(\mu P^n,\nu P^n)\leq C\beta^n(\<V,\mu\>+\<V,\nu\>+1)\;\;\;\mbox{for}\;\;\;n\in\n.\end{equation}
Let $x_0\in E$ and put $\mu_0:=\delta_{x_0}$ (the Dirack measure at $x_0$). Applying  \eqref{com:4} with \hbox{$\mu=\mu_0$} and \hbox{$\nu=\mu_0 P^k$} for each $k\in\mathbb{N}_0$ and condition \hyperref[cnd:B5]{(B5)}, we see that $(\mu_0 P^k)_{k\in\n_0}$ is a Cauchy sequence with respect to $d_{FM}$. Since $(\mathcal{M}_1(E),d_{FM})$ is complete, \hbox{$\mu_0 P^k \stackrel{w}{\to }\mu^*$} (as $k\to \infty$) for some \hbox{$\mu^*\in\mathcal{M}_1(E)$}. The assumption of the Feller property ensures that $\mu^*$ is invariant  \hbox{for $P$}. Clearly, due to \hyperref[cnd:B1]{(B1)}, $V$ is integrable with respect to each invariant measure, whence \hbox{$\<V, \mu^*\><\infty$} and $\mu^*$ is a unique invariant probability measure of $P$.

\section*{Acknowledgement}
We acknowledge useful discussions with S.C. Hille. H. Wojew\'odka is supported by the Foundation for Polish Science (FNP).

\footnotesize
\bibliography{references}
\bibliographystyle{plain}
\end{document}